\documentclass[12pt, reqno]{amsart}
\usepackage{amsmath, amsthm, amscd, amsfonts, mathrsfs, amssymb, graphicx, xcolor, float}
\usepackage[bookmarksnumbered, colorlinks, plainpages]{hyperref}
\input{mathrsfs.sty}
\hypersetup{colorlinks=true,linkcolor=red, anchorcolor=green, citecolor=cyan, urlcolor=red, filecolor=magenta, pdftoolbar=true}

\usepackage{cancel}

\newtheorem{theorem}{Theorem}[section]
\newtheorem{lemma}[theorem]{Lemma}

\newtheorem{corollary}[theorem]{Corollary}
\theoremstyle{definition}
\newtheorem{definition}[theorem]{Definition}
\newtheorem{example}[theorem]{Example}

\theoremstyle{remark}
\newtheorem{remark}[theorem]{Remark}
\numberwithin{equation}{section}

\usepackage[T1]{fontenc}

\let\<\langle
\def\ch{\raise 0.5ex \hbox{$\chi$}}

\let\\\cr

\let\epsilon\varepsilon

\begin{document}

\title[Convergence of Random Products of Projections]{Convergence of Random Products of Projections Under Infinite-Periodic Selections}
	
\author[R. Eskandari]{Rasoul Eskandari}
\address{Department of Mathematics Education, Farhangian University, P.O. Box 14665-889, Tehran, Iran.}
\email{Rasoul.eskandari@cfu.ac.ir, eskandarirasoul@yahoo.com}	
	
\author[M. S. Moslehian]{Mohammad Sal Moslehian$^*$}
\address{Department of Pure Mathematics, Faculty of Mathematical Sciences, Ferdowsi University of Mashhad, P. O. Box 1159, Mashhad 91775, Iran}
\email{moslehian@um.ac.ir; msmoslehian@gmail.com}

\dedicatory{Dedicated to Professor Mehdi Radjabalipour with respect and affection}	
	
	\subjclass{46C05; 47A05; 47B02}
	\keywords{Random product of projections; pseudo-periodic function; infinite-periodic function; strong convergence.}
\thanks{$^*$Corresponding author}
	
	\begin{abstract}		
	Let $\{P_j\}$ be a sequence of projections onto the closed subspaces $\mathcal{M}_j$ of a Hilbert space $\mathscr{H}$. 
	Consider an infinite sequence $P_{i_1}, P_{i_2}, \ldots$ with each $P_{i_n} \in \{P_1, P_2, \ldots\}$, possibly repeating in some order or randomly. 
	The question is: Under what conditions does the sequence $\{P_{i_n} \cdots P_{i_2} P_{i_1} x\}_{n=1}^{\infty}$ converge strongly or weakly to $Px$ for every $x \in \mathscr{H}$, where $P$ is the projection onto the intersection $\mathcal{M} = \bigcap_{i=1}^{\infty} \mathcal{M}_i$?
	
	In this paper, we present some results concerning random products of countably infinitely many projections $\{P_j\}_{j=1}^{\infty}$ that incorporates the notion of an infinite-periodic function. More precisely, we introduce a new class of functions $\sigma \colon \mathbb{N} \to \mathbb{N}$, called infinite-periodic functions, and rigorously show that the sequence $\{T_n x\}$ defined by 
	\[
	T_1 := P_{\sigma(1)}\quad \mbox{and} \quad T_n := P_{\sigma(n)} T_{n-1} \quad \text{for all } n \geq 2,
	\]
	for $x \in \mathscr{H}$, converges weakly to $Px$, where $P$ is the projection onto $\bigcap_{j=1}^{\infty} \mathcal{R}(P_j)$. We also provide some technical examples to illustrate our results. 
\end{abstract}

\maketitle

\section{Introduction}
The $C^*$-algebra of all bounded linear operators on a complex Hilbert space $(\mathscr H, \langle \cdot, \cdot \rangle)$ is denoted by $\mathbb{B}(\mathscr H)$. For an operator $T$, let $\mathcal{R}(T)$ and $\mathcal{N}(T)$ denote its range and kernel, respectively. By a projection in $\mathbb{B}(\mathscr H)$, we mean an orthogonal projection.

Let $P_1,\ldots,P_r$ be projections onto closed subspaces $\mathscr M_1,\ldots,\mathscr M_r$, respectively, and let $P$ be the projection onto $\bigcap_{i=1}^r \mathscr M_i$. An interesting question can be stated as follows: Under what conditions does the randomly generated sequence
\[
\{P_{i_n} \cdots P_{i_2} P_{i_1} x\},
\]
where each $P_{i_n}$ is chosen randomly from the set $\{P_1,\ldots,P_r\}$, converge strongly or weakly to $Px$ in $\mathscr H$?

This problem is inherently complicated, particularly in the case $r = \infty$; see \cite{DYE, DYE2, SAK}.

One application of this problem lies in finding and approximating solutions of systems of operator equations. Let $A_i \in \mathbb{B}(\mathscr H)$ for all $1 \leq i \leq n$ and consider the system
\begin{equation}\label{system}
	A_i x = 0 \qquad (1 \leq i \leq n).
\end{equation}
Define $\mathscr M_i = \{x \in \mathscr H : A_i x = 0\}$, and let $P_i$ be the projection onto $\mathscr M_i$. Then, if the limit of $\{P_{i_n} \cdots P_{i_2} P_{i_1} x\}$ exists, it is a solution of \eqref{system}; see \cite{DEU2}.

Strong convergence for the case of two projections (alternating between two closed subspaces) was first proved by von Neumann in \cite{VON}. Halperin \cite{N} later extended von Neumann's result to finitely many projections (more than two subspaces). These results are summarized in the following theorem.

\begin{theorem}(von Neumann Alternating Projections Theorem) Let $P_1, \ldots, P_r\in \mathbb{B}(\mathscr{H})$ be projections. For each $x \in \mathscr H$, it holds that
	\[
	\lim_{n \to \infty} \| (P_r P_{r-1} \cdots P_1)^n x - P x \| = 0,
	\]
	where $P$ is the projection onto $\bigcap_{i=1}^r \mathcal{R}(P_i)$.
\end{theorem}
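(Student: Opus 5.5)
Set $T = P_r P_{r-1}\cdots P_1$. The plan is to prove that $T^n x \to Px$ by splitting $\mathscr H$ into $\mathcal{M}=\bigcap_{i=1}^r\mathcal{R}(P_i)$ and its orthogonal complement. On $\mathcal M$ we have $Tx=x=Px$ for every $x$, so the whole task is to show that $T^n y\to 0$ for every $y\in\mathcal M^\perp$. Since $\mathcal M^\perp$ is invariant under each $P_i$ (each $P_i$ commutes with $P$), it is invariant under $T$.

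\textbf{Step 1 (fixed points).} We show that $\mathcal{N}(I-T)=\mathcal M$, and the same for $T^*=P_1\cdots P_r$.
\begin{itemize}
\item If $Tx=x$, then $\|x\|=\|P_r\cdots P_1x\|\le \|P_1x\|\le\|x\|$.
\item Hence $\|P_1x\|=\|x\|$, and for a projection this forces $P_1x=x$.
\item Inducting on the remaining factors gives $P_ix=x$ for all $i$, so $x\in\mathcal M$.
\end{itemize}

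\textbf{Step 2 (asymptotic regularity).} Following Kakutani's idea, we use the inequality
\[
\|x\|^2-\|Tx\|^2=\sum_{k=1}^r\bigl(\|Q_{k-1}x\|^2-\|Q_kx\|^2\bigr)=\sum_{k=1}^r\|(I-P_k)Q_{k-1}x\|^2,
\]
where $Q_k=P_k\cdots P_1$ and $Q_0=I$. Each summand equals $\|Q_{k-1}x-Q_kx\|^2$, so the triangle inequality and Cauchy--Schwarz give $\|x-Tx\|^2\le r(\|x\|^2-\|Tx\|^2)$.

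Now apply this with $x$ replaced by $T^nx$. The norms $\|T^nx\|$ are nonincreasing, so they converge, and their squared differences telescope. Therefore $\sum_n\|T^nx-T^{n+1}x\|^2<\infty$, and in particular $T^nx-T^{n+1}x\to0$.

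\textbf{Step 3 (weak convergence).} We use the mean-ergodic decomposition $\mathscr H=\mathcal N(I-T)\oplus\overline{\mathcal R(I-T)}$, which holds for a contraction because $\mathcal N(I-T)=\mathcal N(I-T^*)$. By Step 1 this reads $\mathcal M^\perp=\overline{\mathcal R(I-T)}$.
\begin{itemize}
\item For $y=(I-T)z$ we get $T^ny=T^nz-T^{n+1}z\to0$ in norm, by Step 2.
\item Since $\|T^n\|\le1$, a density argument extends $T^ny\to0$ to all $y\in\mathcal M^\perp$.
\end{itemize}
This already gives strong convergence, and it is cleaner than passing through weak limits.

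\textbf{Main obstacle.} The key estimate is Step 2: one has to convert the decrease of the norm under $T$ into control of $\|x-Tx\|$. This is exactly where the product structure of $T$ matters, since the argument uses the Pythagorean identity for each $P_k$. Once that inequality is in hand, the decomposition in Step 3 is routine Hilbert-space theory. Combining the two pieces, $T^nx=T^nPx+T^n(I-P)x=Px+o(1)$, which proves the theorem.
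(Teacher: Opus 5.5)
Your proof is correct and complete. Note that the paper gives no proof of this statement: it is quoted in the introduction as the classical theorem of von Neumann ($r=2$) and Halperin, with Kopeck\'{a}--Reich cited for geometric proofs. What you give is the standard asymptotic-regularity-plus-ergodic-decomposition proof of Halperin's theorem. One small simplification: your Step 1 already shows $\mathcal N(I-T^*)=\mathcal M$, so you get $\overline{\mathcal R(I-T)}=\mathcal N(I-T^*)^\perp=\mathcal M^\perp$ without the general contraction fact.

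The more useful comparison is with the machinery the paper uses for its own results.

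\textbf{Step 2 vs.\ the paper's lemmas.} Your Step 2 is the blocked (cyclic, $k=r$) case of Lemma \ref{sakai}. Both rest on the same telescoping $\sum_n\|T_{n-1}x-T_nx\|^2\le\|x\|^2$, which holds for projections applied in any order. The paper combines this with Lemma \ref{lemma p} and the subsequence principle of Lemma \ref{lemma weak}. That combination only ever produces weak limits, so the argument of Theorem \ref{thempty}, run on the cyclic order, would recover only the weak form of the statement.

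\textbf{Step 3 is what yields strong convergence.} It depends on iterating a single fixed operator $T$. Then $T^n(I-T)z=T^nz-T^{n+1}z$, and a density argument over $\overline{\mathcal R(I-T)}$ is available. This is exactly what is lost for a general infinite-periodic $\sigma$. That is why the paper gets strong convergence only under extra structure: the self-adjointness $T_{2^n-1}^*=T_{2^n-1}$ in Remark \ref{lemma strong}, or positivity plus Vigier's theorem in the final corollary.
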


Readers seeking simple geometric proofs may consult the papers of Kopeck\'{a} and Reich \cite{KR1, KR2}. This result has numerous generalizations and applications, which can be explored further in sources such as \cite{BS} and a series of articles by Reich \cite{PRZ}-\cite{RZ2}. In this theorem, the projections appear in a fixed cyclic order. When projections are applied in an arbitrary order, Amemiya and Ando \cite{AA} demonstrated that if only finitely many projections are involved, the sequence $\{P_{i_n} \cdots P_{i_2} P_{i_1} x\}_{n=1}^\infty$ converges weakly, where each $P_{i_n}$ belongs to $\{P_1, \ldots, P_r\}$. They conjectured that the conclusion remains true if ``weakly" is strengthened to ``strongly''. This conjecture was disproven by Paszkiewicz \cite{PAS} (with $r=5$) and independently by Kopeck\'{a} and V. M\"{u}ller \cite{KM} (with $r=3$). As Kopeck\'{a} and Paszkiewicz \cite{KP} further illustrated, projections can exhibit highly irregular behavior. In particular, in an infinite-dimensional Hilbert space $\mathscr{H}$, one can find three projections $P_1, P_2, P_3$ such that for every nonzero vector $x\in\mathscr{H}$, there exists a sequence of indices $i_1, i_2, \ldots \in \{1,2,3\}$ for which $\{P_{i_n}\cdots P_{i_2}P_{i_1}x\}$ fails to converge strongly.

In \cite{BOR1, BOR2}, weak  convergence of consecutive projections onto infinite families of convex sets is also investigated, although the considered conditions are of a metric rather than combinatorial nature. The combinatorial machinery developed in \cite{BOR3} may be useful for further investigation of random products of projections. They investigate the structure of the set of all partial weak limits of the sequence generated by alternating projections onto closed convex sets in a Hilbert space.

Sakai \cite{SAK}  extended Halperin's result to the setting of quasi-periodic functions with finitely many projections. He then raised the significant question of whether his findings extend to a class broader than quasi-periodic functions, or even to the case of countably many projections. This question was taken up in \cite{MDD}, where the authors considered a more general class of quasi-periodic sequences \cite{THI, THI2}. 
There has been very little investigation into the case involving countably many projections (see \cite{EM, EBM}), which underscores the significance of this work. 

In \cite{EM}, pseudo-periodic functions were defined as a generalization of quasi-periodic functions, thereby extending Sakai's results. The present work enriches this line of research by introducing the notion of infinite-periodic functions and establishing the weak convergence of sequences given by countable infinite products of projections associated with such functions. We also provide some technical examples to illustrate our results.

\section{Convergence of products of an infinite number of projections}

Let $r$ be a positive integer. Let $\sigma:\mathbb{N}\to \{1,2,\cdots,r\}$ be a function. Following \cite[p. 2]{SAK}, we say that $\sigma$ is a \emph{quasi-periodic function} if there exists $m\in \mathbb{N}$ such that 
\[
\left\{\sigma(k+1),\sigma(k+2),\cdots,\sigma(k+m)\right\}=\left\{1,2,\cdots,r\right\}
\]
for all $k\in \mathbb{N}$.

Now we recall the definition of a pseudo-periodic function, introduced in \cite[Definition 2.3]{EM}, which generalizes the notion of quasi-periodic function.

\begin{definition} \label{pse}
	Let $J$ be either a finite or an infinite subset of $\mathbb{N}$. Suppose that $\sigma:\mathbb{N}\to J$ is a surjective function. 
	For each fixed $j$, let $\sigma^{-1}(j)$ be an infinite set and let $\{l_{n}\}_{n=1}^\infty$ be the strictly increasing sequence (depending on $j$) of all natural numbers such that $\sigma(l_{n})=j$, and set $l_{0}=0$. 
	Let $\Gamma_F$ denote the set of all $j$ such that 
	\begin{align}\label{sup}
		I(\sigma,j) = \sup_n \left(l_{n}-l_{n-1}\right) < \infty.
	\end{align}
	If we set $\Gamma_\infty := \sigma(\mathbb{N}) \setminus \Gamma_F$, then $\Gamma_F$ and $\Gamma_\infty$ are disjoint subsets of $\mathbb{N}$ with $\Gamma_F \cup \Gamma_\infty = \sigma(\mathbb{N})$. It is obvious that if $J$ is a finite subset of $\mathbb{N}$ and $\Gamma_F = \sigma(\mathbb{N})$, then $\sigma$ is a quasi-periodic function.
	
	Let $\{k_n\}$ denote the complement (arranged in increasing order) of the union of all sequences $\{l_n\}$  as $j\in \Gamma_{F}$ varies. Clearly, $\sigma(k_n)\in \Gamma_\infty$.
	
	A function $\sigma:\mathbb{N}\to J$ is called a \emph{pseudo-periodic function} if 
	\[
	\Gamma_F = \{1, 2, \ldots, r\}
	\]
	for some positive integer $r$, and the sequence $\{k_n-k_{n-1}\}$ is increasing. 
	In this case, there exists an integer $m \geq r$ such that for every $k \geq 0$,
	\[
	\{1, 2, \ldots, r\} \subseteq \{\sigma(k+1), \sigma(k+2), \ldots, \sigma(k+m)\}.
	\]
	The smallest such $m$ is called the \emph{period} of $\sigma$.
\end{definition}

The terms ``quasi-periodic'' and ``pseudo-periodic'' have firmly established definitions for functions on the real line, yet when applied to sequences, they appear to be used in ways that diverge from their traditional senses.

\begin{definition} \label{gse} 
	Let $J$ be an infinite subset of $\mathbb{N}$ and let $\sigma : \mathbb{N} \to J$ be a surjective function. Let $\sigma^{-1}(j)$ be an infinite set, for all $j\in J$. 
	Let $\{l_{n}\}$, $I(\sigma,j)$, $\Gamma_F$, $\Gamma_\infty$, and $\{k_n\}$ be as in Definition \ref{pse}. 
	
	The function $\sigma$ is called an \emph{infinite-periodic function} if 
	$\Gamma_{\infty}$ is a finite set. This condition guarantees that if $\sigma$ is infinite-periodic, then $\Gamma_F$ is an infinite set.
\end{definition}

\begin{example}\label{exam}
	Let $\{P_i : i \geq 1\}$ be a sequence of projections on a Hilbert space $\mathscr{H}$.  Let $S=\{2^j+1: j\geq 0\}$. Define $\sigma : \mathbb{N} \to \mathbb{N}$ by
	\begin{equation}\label{eqexampleg}
		\sigma(n):= \begin{cases}
			i+3 & n = 2^{i-1}(2k-1),\; n\not\in S, \; i,k \in \mathbb{N}\\
			s+1 & n=2^{3k+s}+1,\; 0\leq s\leq 2,\; k\in \mathbb{N}\cup\{0\}
		\end{cases}.
	\end{equation}
	For instance, for $i = 1$, using \eqref{eqexampleg}, the set of all $l_n$ satisfying $\sigma(l_n) = 1$ is 
	\[
	\{2^0+1, 2^3+1, 2^6+1, 2^{9}+1, \ldots\}.
	\]
	For $i = 2$, the set of all $l_n$ satisfying $\sigma(l_n) = 2$ is 
	\[
	\{2^1+1, 2^4+1, 2^7+1, 2^{10}+1, \ldots\}.
	\]
	For $i = 3$, the set of all $l_n$ satisfying $\sigma(l_n) = 3$ is 
	\[
	\{2^2+1, 2^5+1, 2^8+1, 2^{11}+1, \ldots\}.
	\]
	For $i = 4$, the set of all $l_n$ satisfying $\sigma(l_n) = 4$ is 
	\[
	\{1, 7, 11, 13,15, \ldots\}.
	\]
	For $i = 5$, the set of all $l_n$ satisfying $\sigma(l_n) = 5$ is 
	\[
	\{6, 10, 14, 18,22,26,30,34, \ldots\}.
	\]
	From \eqref{eqexampleg} we obtain, for each $i \geq 4$,
	\[
	I(\sigma,4) = 6,\; I(\sigma,5) = 4,\; I(\sigma,6) = 8,\; \ldots,\; I(\sigma,i) = 2^i.
	\]
	Hence $\sigma$ is an infinite-periodic function. In this case,
	\[
	\Gamma_F = \{ i : i \geq 4 \}\quad \mbox{and} \quad \Gamma_\infty = \{1,2,3\} .
	\]
	
	Now, let $\sigma$ be an infinite-periodic function. Fix $x \in \mathscr H$ and define
	\begin{equation}\label{eq define Tn}
		T_1 x = P_{\sigma(1)} x\quad \mbox{and} \quad T_n x = P_{\sigma(n)} T_{n-1} x \quad (n \geq 2).
	\end{equation}
	We claim that $\{ T_n x \}$ converges weakly to $Px$, where $P$ is the projection onto $\bigcap_{i=1}^\infty \mathcal{R}(P_i)$. To prove this we need the following lemmas.
\end{example}

\begin{lemma}\label{sakai}\cite[Lemma 2.5]{EM}
	Let $J$ be either an infinite or a finite subset of $\mathbb{N}$.
	Let $P_n \in \mathbb{B}(\mathscr{H})$ be a projection onto a closed subspace $\mathscr{M}_n$ of $\mathscr{H}$ for each $n \in J$. 
	Let $\sigma:\mathbb{N} \to J$ be an arbitrary function and define $T_1 := P_{\sigma(1)}$ and $T_n := P_{\sigma(n)} T_{n-1}$ for $n \geq 2$. Then
	\[
	\lim_{n \to \infty} \|T_{n-k} x - T_n x\| = 0
	\]
	for each $k \geq 1$.
\end{lemma}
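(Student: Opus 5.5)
The plan is a telescoping argument based on the Pythagorean identity for orthogonal projections. For any projection $Q$ and any $y\in\mathscr H$ we have $\langle Qy, y-Qy\rangle=0$, hence
\[
\|y\|^2=\|Qy\|^2+\|y-Qy\|^2 .
\]
Apply this with $Q=P_{\sigma(n)}$ and $y=T_{n-1}x$, where we set $T_0:=I$. This gives
\[
\|T_{n-1}x\|^2-\|T_nx\|^2=\|T_{n-1}x-T_nx\|^2 \qquad (n\ge 1).
\]
In particular $\{\|T_nx\|\}$ is nonincreasing and bounded below by $0$, so it converges. Summing the identity over $n$ then yields
\[
\sum_{n=1}^\infty \|T_{n-1}x-T_nx\|^2\le \|x\|^2<\infty ,
\]
and therefore $\|T_{n-1}x-T_nx\|\to 0$. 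This is the case $k=1$.

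For general $k\ge 1$ I would write the difference as a telescoping sum and apply the triangle inequality:
\[
\|T_{n-k}x-T_nx\|\le \sum_{i=0}^{k-1}\|T_{n-k+i}x-T_{n-k+i+1}x\| .
\]
The right-hand side has a fixed number $k$ of terms. Each term tends to $0$ as $n\to\infty$ by the $k=1$ case. Hence the whole sum tends to $0$, which proves the claim for each fixed $k$.

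There is no real obstacle here. The only point requiring care is that the argument never uses any property of $\sigma$ or of the index set $J$. It relies only on the monotone decrease of the norms, which holds for an arbitrary composition of orthogonal projections, so the lemma is valid for every function $\sigma$ and every $J$, finite or infinite. Note also that $k$ must stay fixed while $n\to\infty$; the statement does not assert uniformity in $k$.
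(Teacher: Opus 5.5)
Your proof is correct. The paper gives no argument of its own here and only cites \cite[Lemma 2.5]{EM}; your route is the standard proof of this fact: the Pythagorean identity $\|T_{n-1}x\|^2-\|T_nx\|^2=\|T_{n-1}x-T_nx\|^2$, then summability of the squared increments, then a telescoping triangle inequality for fixed $k$ (a common variant uses Cauchy--Schwarz to get $\|T_{n-k}x-T_nx\|^2\le k\bigl(\|T_{n-k}x\|^2-\|T_nx\|^2\bigr)\to 0$).
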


\begin{lemma}\label{lemma weak}\cite[Lemma 2.8]{EM}
	Let $\{x_n\}$ be a bounded sequence in a Hilbert space $\mathscr{H}$ and let $x_0 \in \mathscr{H}$. 
	If every weakly convergent subsequence of $\{x_n\}$ converges weakly to $x_0$, then the whole sequence $\{x_n\}$ converges weakly to $x_0$, that is,
	\[
	w\!-\!\lim_{n\to\infty} x_n = x_0.
	\]
\end{lemma}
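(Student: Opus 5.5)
The plan is to argue by contradiction, combining the hypothesis with the weak sequential compactness of bounded sets in a Hilbert space. Suppose $\{x_n\}$ does not converge weakly to $x_0$. Then there is a vector $y\in\mathscr H$ such that $\langle x_n - x_0, y\rangle \not\to 0$. Hence there exist $\epsilon>0$ and a strictly increasing sequence of indices $\{n_k\}$ with
\[
|\langle x_{n_k}-x_0, y\rangle| \geq \epsilon \qquad (k\in\mathbb N).
\]

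Next I extract from $\{x_{n_k}\}$ a weakly convergent subsequence. Since $\{x_{n_k}\}$ is bounded, say $\|x_{n_k}\|\le C$, this follows because closed balls in a Hilbert space are weakly sequentially compact. To keep the argument self-contained, I would give the standard diagonal construction. Let $\mathscr K$ be the closed linear span of $\{x_{n_k}\}$; it is separable, with a countable dense subset $\{z_i\}$. Each scalar sequence $\{\langle x_{n_k}, z_i\rangle\}_k$ is bounded by $C\|z_i\|$. A Cantor diagonal argument therefore yields a subsequence $\{u_m\}$ of $\{x_{n_k}\}$ such that $\lim_m\langle u_m, z_i\rangle$ exists for every $i$.

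An $\epsilon/3$-argument using the uniform bound $C$ then shows that $\lim_m\langle u_m,z\rangle$ exists for every $z\in\mathscr K$. For $z\in\mathscr K^\perp$ we have $\langle u_m,z\rangle=0$, so the limit exists for all $z\in\mathscr H$. The map $z\mapsto \overline{\lim_m\langle u_m,z\rangle}$ is a bounded linear functional of norm at most $C$. By the Riesz representation theorem it equals $\overline{\langle u,z\rangle}$ for some $u\in\mathscr H$, so that $\lim_m\langle u_m,z\rangle=\langle u,z\rangle$ for all $z$, i.e.\ $u_m\to u$ weakly.

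Finally, $\{u_m\}$ is a weakly convergent subsequence of $\{x_n\}$, so the hypothesis forces $u=x_0$, and thus $\langle u_m - x_0, y\rangle\to 0$. On the other hand, each $u_m$ is some $x_{n_k}$, so $|\langle u_m-x_0,y\rangle|\ge\epsilon$ for all $m$. This is a contradiction, which proves $w\!-\!\lim_{n\to\infty} x_n = x_0$.

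The only nonroutine step is the extraction of a weakly convergent subsequence. If citing weak sequential compactness of bounded sets (reflexivity of $\mathscr H$ together with the Eberlein--\v{S}mulian theorem) is acceptable, that step can simply be quoted instead of running the diagonal argument.
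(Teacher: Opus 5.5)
Your proof is correct and follows the standard route. You argue by contradiction to get $y$, $\epsilon$ and a subsequence with $|\langle x_{n_k}-x_0,y\rangle|\ge\epsilon$, then extract a weakly convergent further subsequence whose limit the hypothesis forces to be $x_0$; the paper does not reprove this fact but simply quotes it from \cite[Lemma 2.8]{EM}.

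Your self-contained diagonal argument with the Riesz representation theorem is also sound, including the treatment of conjugate-linearity via $\overline{\lim_m\langle u_m,z\rangle}$. Quoting weak sequential compactness of bounded sets in $\mathscr H$ would suffice in its place.
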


\begin{lemma}\label{lemma p}
	Let $P_n \in \mathbb{B}(\mathscr{H})$ be projections onto closed subspaces $\mathscr{M}_n$ of $\mathscr{H}$ for all $n \in \mathbb{N}$. 
	Let $\sigma:\mathbb{N} \to \mathbb{N}$ be an infinite-periodic function with $\Gamma_\infty = \{1,\ldots,r\}$. 
	Let $\{T_n\}$ be defined by \eqref{eq define Tn}, and let $x \in \mathscr{H}$ be arbitrary. 
	Then the weak limit of any weakly convergent subsequence of $\{T_n x\}$ belongs to $\bigcap_{i=r+1}^\infty \mathscr{M}_i$.
\end{lemma}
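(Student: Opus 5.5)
Fix $j \ge r+1$, so that $j \in \Gamma_F$ and $N := I(\sigma,j) < \infty$. Let $T_{n_k}x \rightharpoonup y$ be a weakly convergent subsequence. The goal is to show $y \in \mathscr{M}_j$; since $j$ is arbitrary, this gives the claim. The idea is to move from each index $n_k$ to a nearby index at which $P_j$ was just applied, using Lemma \ref{sakai} to control the shift.

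\textbf{Step 1: choosing nearby indices.} Because consecutive occurrences of $j$ in $\sigma$ are at most $N$ apart, for each $k$ (with $n_k$ large) there is an integer $d_k \in \{0,1,\ldots,N-1\}$ such that $\sigma(n_k - d_k) = j$. Hence
\[
T_{n_k - d_k}x = P_j T_{n_k-d_k-1}x \in \mathscr{M}_j .
\]

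\textbf{Step 2: comparing with the subsequence.} By Lemma \ref{sakai}, for each fixed $d \in \{0,\ldots,N-1\}$ we have $\|T_{n-d}x - T_n x\| \to 0$. Since there are only finitely many such $d$, this is uniform:
\[
\max_{0\le d\le N-1}\|T_{n-d}x - T_nx\| \to 0 .
\]
Therefore $\|T_{n_k-d_k}x - T_{n_k}x\| \to 0$, and so $T_{n_k-d_k}x \rightharpoonup y$ as well.

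\textbf{Step 3: passing to the limit.} The subspace $\mathscr{M}_j$ is closed and convex, hence weakly closed. It contains every $T_{n_k-d_k}x$, so it contains their weak limit $y$. Equivalently, $\langle y, z\rangle = \lim_k \langle T_{n_k-d_k}x, z\rangle = 0$ for every $z \in \mathscr{M}_j^\perp$.

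\textbf{Main obstacle.} The delicate point is the uniformity in Step 2. The shift $d_k$ varies with $k$, while Lemma \ref{sakai} only handles a fixed shift. The bound $d_k < N = I(\sigma,j)$ reduces this to finitely many fixed shifts, and that is exactly where the hypothesis $j \in \Gamma_F$ is used. For $j \in \Gamma_\infty$ the gaps are unbounded, which explains why the lemma is restricted to indices $i \ge r+1$.
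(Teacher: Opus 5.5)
Your proof is correct and is essentially the paper's argument. The paper likewise combines the bounded gap $I(\sigma,i)<\infty$ with Lemma \ref{sakai} to get $\mathrm{dist}(T_nx,\mathscr{M}_i)\to 0$, looking forward to the next occurrence $n+s_i$ and using $P_iT_nx$ as approximants, and then concludes from the weak closedness of $\mathscr{M}_i$; your backward choice of $n_k-d_k$ with $T_{n_k-d_k}x\in\mathscr{M}_j$ is a slightly more direct variant that also spells out the uniformity over finitely many shifts, which the paper uses tacitly.
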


\begin{proof}
	Let $I(\sigma,i) = m_i$ for all $i \geq r+1$. In view of  Lemma \ref{sakai}, $\lim_{n\to\infty}\mathrm{dist}(T_nx,\mathscr{M}_i)=0$ for each $i\geq r+1$. Indeed, let $i\geq r+1$ be fixed. We have
	\begin{align*}
		\mathrm{dist}(T_nx,\mathscr{M}_i)&=\|T_nx-P_iT_nx\|\\
		&\leq \|T_nx-T_{n+s_i}x\|+\|T_{n+s_i}x-P_iT_nx\|\\
		&\qquad\qquad\qquad\qquad(\mbox{where}~0\leq s_i\leq m_i\mbox{~are such that~} P_{\sigma(n+s_i)}=P_i)\\
		&= \|T_nx-T_{n+s_i}x\|+\|P_iT_{n+s_i-1}x-P_iT_nx\|\\
		&\leq \|T_nx-T_{n+s_i}x\|+\|T_{n+s_i-1}x-T_nx\|.
	\end{align*} 
	This, together with Lemma \ref{sakai}, proves the claim. Hence, there are $y_n\in\mathscr{M}_i$ such that 
	\begin{equation}\label{lemma distance}
		\lim_{n \to \infty}\|T_nx-y_n\|=0
	\end{equation}
	To simplify notation, we denote the weakly convergent subsequence by $\{T_n x\}$ itself, with weak limit $x'$. Employing \eqref{lemma distance} gives that $x'$ is the weak limit of the sequence $\{y_n\}$. Therefore, $x'\in \mathscr M_i$.	
	Since $i \geq r+1$ was arbitrary, we conclude that $x' \in \bigcap_{i=r+1}^\infty \mathcal{M}_i$.
\end{proof}
\begin{theorem}\label{thempty}
	Let $P_n \in \mathbb{B}(\mathscr{H})$ be projections onto closed subspaces $\mathscr{M}_n\subseteq\mathscr{H}$ for all $n \in \mathbb{N}$. 
	Let $\sigma:\mathbb{N} \to \mathbb{N}$ be an infinite-periodic function such that $\Gamma_\infty = \emptyset$. 
	Set $T_1 := P_{\sigma(1)}$, $T_n := P_{\sigma(n)} T_{n-1}$ for all $n \ge 2$. For an arbitrary $x \in \mathscr{H}$, the sequence $\{T_nx\}$ 
	converges weakly to $Px$, where $P$ is the projection onto $\bigcap_{i \in J} \mathcal{R}(P_i)$.
\end{theorem}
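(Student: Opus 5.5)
The plan is to show that every weakly convergent subsequence of $\{T_nx\}$ has weak limit $Px$, and then to conclude with Lemma \ref{lemma weak}.

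\textbf{Step 1 (boundedness).} The sequence is bounded, since $\|T_nx\|\le\|x\|$ for every $n$. Consequently every subsequence of $\{T_nx\}$ has a further weakly convergent subsequence, which is what makes Lemma \ref{lemma weak} applicable.

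\textbf{Step 2 (the limit lies in $\mathcal{M}$).} Because $\Gamma_\infty=\emptyset$, every index $j\in J=\mathbb{N}$ lies in $\Gamma_F$, so $I(\sigma,j)<\infty$ for all $j$. The argument of Lemma \ref{lemma p} goes through with $r=0$. Concretely, for each $j$ we have
\[
\mathrm{dist}(T_nx,\mathscr M_j)\to 0,
\]
which follows from Lemma \ref{sakai} applied with the finitely many shifts $0\le s\le I(\sigma,j)$. Hence any weak cluster point $x'=w\!-\!\lim_k T_{n_k}x$ lies in every $\mathscr M_j$, that is, $x'\in\mathcal M:=\bigcap_{j}\mathscr M_j$.

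\textbf{Step 3 (identify the limit as $Px$).} Each $T_n$ is a product of projections $P_j$. Every such $P_j$ commutes with $P$, since $\mathcal M\subseteq\mathscr M_j$ gives $P_jP=P=PP_j$. Hence $PT_n=P$, and so $PT_nx=Px$ for all $n$. Since $P$ is weakly continuous (a bounded operator), we get
\[
Px'=w\!-\!\lim_k PT_{n_k}x=Px.
\]
But $x'\in\mathcal M$, so $Px'=x'$, and therefore $x'=Px$.

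\textbf{Step 4 (conclusion).} Every weakly convergent subsequence of $\{T_nx\}$ has weak limit $Px$. By Lemma \ref{lemma weak}, the whole sequence converges weakly to $Px$.

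The only genuinely delicate point is Step 2. One must make sure that the bound $0\le s\le I(\sigma,j)$ is uniform in $n$, so that Lemma \ref{sakai} (which handles one fixed shift at a time) can be applied to a finite maximum over $s$. Concretely, I would estimate $\mathrm{dist}(T_nx,\mathscr M_j)$ by
\[
\max_{0\le s\le I(\sigma,j)}\bigl(\|T_nx-T_{n+s}x\|+\|T_{n+s-1}x-T_nx\|\bigr),
\]
which tends to $0$. Once this is in place, Step 3 is routine.
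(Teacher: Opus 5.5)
Your proposal is correct and follows the paper's proof essentially verbatim: Lemma \ref{lemma p} (with $\Gamma_\infty=\emptyset$) puts every weak cluster point in $\bigcap_j \mathcal{R}(P_j)$, the identity $PT_n=P$ (from $PP_j=P$) shows that this cluster point equals $Px$, and Lemma \ref{lemma weak} concludes. Your explicit justifications of $PT_n=P$ and of the uniform bound on the shift $s$ simply spell out steps the paper uses without stating them.
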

\begin{proof}
	According to Lemma \ref{lemma weak}, it is enough to show that each weakly convergent subsequence of $\{T_nx\}$ has the weak limit $Px$. Suppose that $\{T_{n_i}x\}$ is a subsequence of $\{T_nx\}$ such that $w\!-\!\lim_{i\to\infty} T_{n_i} x=x_0$. Employing Lemma \ref{lemma p} gives us $x_0\in \bigcap_{i=1}^\infty \mathcal{R}(P_i)$, so $Px_0=x_0$. Now, we have 
	\[
	Px=w\!-\!\lim_{i\to\infty} PT_{n_i} x=Pw\!-\!\lim_{i\to\infty} T_{n_i} x=Px_0=x_0.
	\]
	Hence, $w\!-\!\lim_{i\to\infty} T_{n_i} x=Px$.
\end{proof}

\begin{lemma}\label{lemma k_n}
	Let $P_j \in \mathbb{B}(\mathscr{H})$ be projections onto closed subspaces $\mathscr{M}_j$ of $\mathscr{H}$ for all $j \in \mathbb{N}$. 
	Let $\sigma : \mathbb{N} \to \mathbb{N}$ be an infinite-periodic function with $\Gamma_{\infty} = \{1, \ldots, r\}$. 
	Let $\{T_n\}_{n=1}^\infty$ and $\{k_n\}_{n=1}^\infty$ be defined by \eqref{eq define Tn}, and let $x \in \mathscr{H}$ be arbitrary. Let $\{{r_n}\}\subseteq\mathbb{N}$ be a sequence.
	If $\{T_{k_{r_n}} x\}$ converges weakly to $x_0$, then for every $\ell \ge 1$, the sequence $\{T_{k_{r_{n}+\ell}} x\}_{n=1}^\infty$ also converges weakly to $x_0$.
\end{lemma}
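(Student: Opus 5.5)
The plan is to prove the case $\ell=1$ and then finish by induction on $\ell$. If the claim holds for $\ell=1$ and every sequence $\{r_n\}$, apply it to the shifted sequence $\{r_n+\ell-1\}$ to pass from $\ell-1$ to $\ell$. So assume $w\!-\!\lim_n T_{k_{r_n}}x=x_0$ and consider $\{T_{k_{r_n+1}}x\}$. This sequence is bounded by $\|x\|$. By Lemma \ref{lemma weak} it suffices to show that every weakly convergent subsequence has limit $x_0$. Pass to such a subsequence, still indexed by $n$, with weak limit $x_1$.

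By Lemma \ref{lemma p}, both $x_0$ and $x_1$ lie in the closed subspace $\mathscr N:=\bigcap_{i=r+1}^\infty \mathscr M_i$. Each is the weak limit of a subsequence of $\{T_nx\}$ (the $k_{r_n}$ and $k_{r_n+1}$ are strictly increasing). Hence it is enough to show $\langle x_1,z\rangle=\langle x_0,z\rangle$ for every $z\in\mathscr N$, since then $x_1-x_0\in\mathscr N\cap\mathscr N^\perp=\{0\}$.

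The key observation uses the structure of $\{k_n\}$. For indices $j$ with $k_{r_n}<j<k_{r_n+1}$, we have $\sigma(j)\in\Gamma_F=\{r+1,r+2,\ldots\}$, so $P_{\sigma(j)}z=z$ for $z\in\mathscr N$. Since each $P_{\sigma(j)}$ is self-adjoint,
\[
\langle T_jx,z\rangle=\langle T_{j-1}x,P_{\sigma(j)}z\rangle=\langle T_{j-1}x,z\rangle .
\]
Iterating gives $\langle T_{k_{r_n+1}-1}x,z\rangle=\langle T_{k_{r_n}}x,z\rangle$, regardless of how large the gap $k_{r_n+1}-k_{r_n}$ is. This is the point of the argument: the gaps may be unbounded, since $\Gamma_F$ is infinite, so Lemma \ref{sakai} cannot be applied directly across the whole gap.

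Only the last step uses the norm estimate. By Lemma \ref{sakai} with $k=1$, $\|T_{k_{r_n+1}}x-T_{k_{r_n+1}-1}x\|\to0$. Therefore
\[
\langle x_1,z\rangle=\lim_n\langle T_{k_{r_n+1}}x,z\rangle=\lim_n\langle T_{k_{r_n+1}-1}x,z\rangle=\lim_n\langle T_{k_{r_n}}x,z\rangle=\langle x_0,z\rangle .
\]
This concludes the $\ell=1$ case. I expect the only delicate point to be recognizing that the argument must be carried out against test vectors in $\mathscr N$, not in norm, which is why Lemma \ref{lemma p} is needed to place both limits in $\mathscr N$.
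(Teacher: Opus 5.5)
Your proof is correct and is essentially the paper's argument. Pairing with $z\in\bigcap_{i\ge r+1}\mathscr M_i$ and using $P_{\sigma(j)}z=z$ across the gap $k_{r_n}<j<k_{r_n+1}$ is just the adjoint form of the paper's identity $QT_{k_{r_n+1}-1}x=QT_{k_{r_n}}x$ (from $QP_j=Q$ for $j\in\Gamma_F$), and the remaining ingredients (Lemma \ref{lemma p} for both limits, Lemma \ref{sakai} for the single last step, Lemma \ref{lemma weak}, and induction on $\ell$) are exactly the paper's; like the paper, you tacitly take $\{r_n\}$ strictly increasing (at least $r_n\to\infty$ is needed, since for constant $r_n$ the statement fails in general), which is how the lemma is used later.
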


\begin{proof}
	We first prove the assertion for $\ell=1$. Suppose that $\displaystyle w\!-\!\lim_{n \to \infty} T_{k_{r_n}} x = x_0$. 
	From Lemma \ref{lemma p}, we infer that $x_0 \in \bigcap_{j \in \Gamma_F} \mathcal{R}(P_j)$. 
	Let $Q$ be the projection onto $\bigcap_{j \in \Gamma_F} \mathcal{R}(P_j)$. Then $Q x_0 = x_0$, and consequently
	\begin{align}\label{eqr_n+1}
		w\!-\!\lim_{n \to \infty} Q T_{k_{r_n+1}-1} x&= w\!-\!\lim_{n \to \infty} Q P_{\sigma(k_{r_n+1}-1)}P_{\sigma(k_{r_n+1}-2)}\cdots P_{\sigma(k_{r_n}+1)}T_{k_{r_n}} x\nonumber\\
		&= w\!-\!\lim_{n \to \infty} Q T_{k_{r_n}} x\quad\quad(\mbox{since~}QP_j=Q \mbox{~for~} j\in \Gamma_F)\nonumber\\
		&= Q w\!-\!\lim_{n \to \infty} T_{k_{r_n}} x \nonumber\\
		& = x_0.
	\end{align}
	
	Now let $\{T_{k_{s_n+1}} x\}$ be an arbitrary weakly convergent subsequence of $\{T_{k_{r_n+1}} x\}$, with weak limit $x'$. 
	Lemma \ref{lemma p} ensures that $x' \in \bigcap_{j \in \Gamma_F} \mathcal{R}(P_j)$. On the other hand, by Lemma \ref{sakai}, we have
	\[
	x'= w\!-\!\lim_{n \to \infty} T_{k_{s_n+1}-1} x.
	\] 
	Employing \eqref{eqr_n+1} gives us 
	\begin{align*}
		x' = Q x' &= w\!-\!\lim_{n \to \infty} Q T_{k_{s_n+1}-1} x\\
		&=w\!-\!\lim_{n \to \infty} Q T_{k_{r_n+1}-1} x\quad(\mbox{since~}\{s_{n}\}~\mbox{is~a~subsequence~of ~}\{r_{n}\})\\
		&= x_0.
	\end{align*}
	
	Lemma \ref{lemma weak} yields that $\{T_{k_{r_n+1}} x\}$ converges weakly to $x_0$. 
	Now let $w\!-\!\lim_{n \to \infty} T_{k_{r_n+\ell}} x = x_0$. Applying the same technique as above gives $w\!-\!\lim_{n \to \infty} T_{k_{r_n+\ell+1}} x = x_0$ for every $\ell \ge 1$. Now, by induction on $\ell$, we obtain the result.
\end{proof}

Let $\sigma:\mathbb{N}\to \mathbb{N}$ be an infinite-periodic function such that $\Gamma_{\infty}=\{1,\ldots,r\}$ for some $r\geq 2$, and let $\{k_n\}$ be as in Definition \ref{pse}. Since $\sigma^{-1}(j)$ is an infinite set for all $j\in \mathbb{N}$ and $P_{\sigma(k_n)}\in \{P_1,\ldots ,P_r\}$, so for each $k_n$ there exist $s,t\geq 0$ (depending on $n$) such that 
\begin{align}\label{eqleft}
	P_{\sigma(k_{n-s})}=P_{\sigma(k_{n-s+1})}=\cdots&=P_{\sigma(k_n)}=\cdots =P_{\sigma(k_{n+t-1})}=P_{\sigma(k_{n+t})},\nonumber\\ P_{\sigma(k_{n-s-1})}&\neq P_{\sigma(k_n)}\neq P_{\sigma(k_{n+t+1})}. 
\end{align}
In fact, 
\[
s=\min\{p: \mbox{for~all~}m\in [p,n],P_{\sigma(k_m)}=P_{\sigma(k_n)}\},\] and
\[
t=\max\{q: \mbox{for~all~}m\in [n,q],P_{\sigma(k_m)}=P_{\sigma(k_n)}\}
\]
Suppose that $\{T_{k_{r_n}} x\}$ is a subsequence of $\{T_{k_n}x\}$. We denote the {\emph{ left (right)-shift sequence} of $\{k_{r_n}\}$ by $\{k_{s_n}\} (\{k_{t_n}\})$, where $s_n$ (and $t_n$) satisfy \eqref{eqleft} by replacing $n$ with $r_n$. Hence, for each $n\in \mathbb{N}$ and each $s_n\leq m\leq t_n$, we have 
	\begin{equation}\label{eqleftshift}
		P_{\sigma(k_m)}=P_{\sigma(k_{r_n})}\quad \mbox{and} \quad P_{\sigma(k_{s_n-1})}\neq P_{\sigma(k_{r_n})}\neq P_{\sigma(k_{t_n+1})}.
	\end{equation} 
	\begin{example}
		Let $\{P_i : i \geq 1\}$ be a sequence of projections on a Hilbert space $\mathscr{H}$. Let $S=\{2^j+1; j\geq 0\}$. We define $\sigma : \mathbb{N} \to \mathbb{N}$ by:
		
		\begin{align*}
			\sigma(n):= \begin{cases}
				i+2& n\not\in S,\; n= 2^{i-1}(2k-1), \; i,k \in \mathbb{N}\\
				1& n=2^j+1, \; j\in [m(m+1),m(m+2)], \; m\geq 0\\
				2& \text{otherwise}
			\end{cases}.
		\end{align*}
		
		By the same method as used in Example \ref{exam}, we see that $\sigma$ is an infinite-periodic function with $\Gamma_{\infty}=\{1,2\}$ and $k_n=2^n+1$, $n\geq 0$. In this case we have
		\[
		\begin{matrix}
			\cdots & P_{\sigma(2^8+1)} & P_{\sigma(2^7+1)} & P_{\sigma(2^6+1)} & P_{\sigma(2^5+1)} & P_{\sigma(2^4+1)} & P_{\sigma(9)} & P_{\sigma(5)} & P_{\sigma(3)} & P_{\sigma(1)} \\
			~&\shortparallel&\shortparallel&\shortparallel&\shortparallel&\shortparallel&\shortparallel&\shortparallel&\shortparallel&\shortparallel\\
			\cdots & P_1 & P_1 & P_1 & P_2 & P_2 & P_1 &P_1 & P_2 & P_1 
		\end{matrix}
		\]
		Let $r_n$ be the last place that $\sigma(k_{r_n})=1$ in each period $[m(m+1),m(m+2)]$, that is, $1,2^3+1, 2^{8}+1,\cdots, 2^{n(n+2)},\cdots$. Then the left-shift sequence $\{k_{s_n}\}$ is $1,2^2+1,2^{6}+1,2^{12}+1,2^{20}+1 $, that is, $\{k_{s_n}\}=\{2^{n(n+1)}+1\}_{n=0}^\infty$. Note that in this case $\{k_{t_n}\}$, the right-shift sequence of $\{k_{r_n}\}$, is $\{k_{r_n}\}$. 	Let $\{T_n\}$ be defined by \eqref{eq define Tn}, and let $x \in \mathscr{H}$ be arbitrary. In this case $\Gamma_\infty=\{1,2\}$ , and $\Gamma_F=\{P_j; j\geq 3\}$. Suppose that $Q$ is the projection onto $\bigcap_{j \in \Gamma_F} \mathcal{R}(P_j) \cap \mathcal{R}(P_1)$. Since, $QP_j=Q$ for all $j\in \Gamma_F\cup\{1\}$, we have 
		\[
		QT_{k_{r_n}}=QT_{k_{s_n}}.
		\]
	\end{example}
	We need the following lemma to achieve the main result. 
	\begin{lemma}\label{lemma shift}
		Let $P_n \in \mathbb{B}(\mathscr{H})$ be projections onto closed subspaces $\mathscr{M}_n$ of $\mathscr{H}$ for all $n \in \mathbb{N}$. 
		Let $\sigma : \mathbb{N} \to \mathbb{N}$ be an infinite-periodic function with $\Gamma_{\infty} = \{1, \ldots, r\}$, where $r\geq 2$. 
		Let $\{T_n\}$ be defined by \eqref{eq define Tn}. Let $ \{k_n\} $ be the sequence from Definition \ref{pse}.	Suppose that $\{T_{k_{r_n}} x\}$ converges weakly to $x_0$ for some subsequence $\{k_{r_n}\}$ of $\{k_n\}$. 
		Then  $\{T_{k_{s_n}}x\}$ ($\{T_{k_{t_n}}x\}$) also converges weakly to $x_0$, where $\{k_{s_n}\}$ ($\{k_{t_n}\}$) is the left (right) shift sequence of $\{k_{r_n}\}$. 
		
	\end{lemma}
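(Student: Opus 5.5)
The plan is to mimic the proof of Lemma \ref{lemma k_n}, replacing $Q$ by a projection that also absorbs the repeated operator $P_{\sigma(k_{r_n})}$ along each run. By Lemma \ref{lemma weak}, it suffices to show that every weakly convergent subsequence of $\{T_{k_{s_n}}x\}$ has weak limit $x_0$. I treat the left-shift case; the right-shift case is symmetric.

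Fix such a subsequence, indexed by $n$ in some infinite set $N_1$, with weak limit $x'$. Since $\sigma(k_{r_n})\in\{1,\ldots,r\}$ takes only finitely many values, pigeonhole gives an infinite $N_2\subseteq N_1$ and a fixed $j\in\Gamma_\infty$ with $\sigma(k_{r_n})=j$ for all $n\in N_2$. By \eqref{eqleftshift}, $\sigma(k_m)=j$ for all $s_n\le m\le r_n$ and $n\in N_2$. Let $Q_j$ be the projection onto $\bigcap_{i\in\Gamma_F}\mathcal{R}(P_i)\cap\mathscr{M}_j$. Then $Q_jP_i=Q_j$ for every $i\in\Gamma_F\cup\{j\}$.

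Between the indices $k_{s_n}$ and $k_{r_n}$, every index $p$ with $k_{s_n}<p\le k_{r_n}$ falls into one of two kinds:
\begin{itemize}
\item it is not among the $k_m$, so $\sigma(p)\in\Gamma_F$;
\item it equals some $k_m$ with $s_n<m\le r_n$, so $\sigma(p)=j$.
\end{itemize}
Writing $T_{k_{r_n}}=P_{\sigma(k_{r_n})}\cdots P_{\sigma(k_{s_n}+1)}T_{k_{s_n}}$, we therefore get
\[
Q_jT_{k_{r_n}}x=Q_jT_{k_{s_n}}x\qquad (n\in N_2).
\]

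Next I locate both limits inside $\mathcal{R}(Q_j)$. By Lemma \ref{lemma p}, both $x_0$ and $x'$ lie in $\bigcap_{i\in\Gamma_F}\mathscr{M}_i$. Moreover $T_{k_{s_n}}x=P_jT_{k_{s_n}-1}x\in\mathscr{M}_j$ and likewise $T_{k_{r_n}}x\in\mathscr{M}_j$ for $n\in N_2$. Since $\mathscr{M}_j$ is weakly closed, $x',x_0\in\mathscr{M}_j$, and hence $Q_jx'=x'$ and $Q_jx_0=x_0$. Because $Q_j$ is weakly continuous,
\[
x'=Q_jx'=w\!-\!\lim_{n\in N_2}Q_jT_{k_{s_n}}x=w\!-\!\lim_{n\in N_2}Q_jT_{k_{r_n}}x=Q_jx_0=x_0 .
\]
This uses that $\{T_{k_{r_n}}x\}_{n\in N_2}$ is a subsequence of a sequence converging weakly to $x_0$. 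Lemma \ref{lemma weak} then gives $w\!-\!\lim T_{k_{s_n}}x=x_0$.

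For the right shift, the same argument applies with $T_{k_{t_n}}=P_{\sigma(k_{t_n})}\cdots P_{\sigma(k_{r_n}+1)}T_{k_{r_n}}$. All intermediate indices again lie in $\Gamma_F\cup\{j\}$, so $Q_jT_{k_{t_n}}x=Q_jT_{k_{r_n}}x$. Also $T_{k_{t_n}}x\in\mathscr{M}_j$, so any weak limit of a subsequence lies in $\mathcal{R}(Q_j)$ and must equal $x_0$.

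The main obstacle I expect is the bookkeeping around the index $j$. The value $\sigma(k_{r_n})$ can vary with $n$, so a single projection $Q$ does not work, and one must pass to a further subsequence on which $j$ is constant before invoking $Q_j$. A second point to check carefully is that $x_0$ itself lies in $\mathscr{M}_j$. This follows because the whole subsequence $\{T_{k_{r_n}}x\}_{n\in N_2}$ already lies in $\mathscr{M}_j$. Without this, the identity $Q_jx_0=x_0$, and hence the conclusion, would fail.
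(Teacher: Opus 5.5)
Your proposal is correct and follows the paper's proof. You fix the value $j$ of $\sigma(k_{r_n})$ on a subsequence and use the projection $Q_j$ onto $\bigcap_{i\in\Gamma_F}\mathscr{M}_i\cap\mathscr{M}_j$ to get $Q_jT_{k_{r_n}}x=Q_jT_{k_{s_n}}x$, place both weak limits in $\mathcal{R}(Q_j)$ via Lemma \ref{lemma p} and the fact that the iterates lie in $\mathscr{M}_j$, and finish with Lemma \ref{lemma weak}. The differences from the paper are organizational and slightly cleaner: you pigeonhole inside a fixed weakly convergent subsequence rather than making an upfront ``without loss of generality'' partition, and you run the right-shift case directly instead of reducing it to the left-shift case. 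One small correction: \eqref{eqleftshift} gives $P_{\sigma(k_m)}=P_j$ rather than $\sigma(k_m)=j$, but that is all your argument actually uses.
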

	
	\begin{proof}	
		We only prove the case of the left-shift sequence, because ${k_{s_n}}$ is the left-shift sequence of ${k_{t_n}}$. From the proof of the left-shift case, the weak limit of any weakly convergent subsequence of $T_{k_{t_n}}x$ is $x_0$. It follows from Lemma \ref{lemma weak} that $w\!-\lim_{n \to \infty} T_{k_{t_n}} x = x_0$.
		
		Since $\{P_{\sigma(k_{r_n})} : n \ge 1\}$ is contained in $\{P_1, \ldots, P_r\}$, we can partition $\{T_{k_{r_n}}\}_{n=1}^{\infty}$ into finitely many subsequences $\{T_{k_{r'_n}}\}_{n=1}^{\infty}$ such that $P_{\sigma(k_{r'_n})} = P_{\sigma(k_{r'_m})}$ for all $n, m \ge 1$. 
		Hence, without loss of generality, we may assume $P_{\sigma(k_{r_n})} = P_{r_0}$ for all $n \ge 1$, where $1 \le r_0 \le r$.
		
		Let $w\!-\!\lim_{n \to \infty} T_{k_{r_n}} x = x_0$. 
		It follows from Lemma \ref{lemma p} that $x_0 \in \bigcap_{j \in \Gamma_F} \mathcal{R}(P_j)$. By the assumption that $P_{\sigma(k_{r_n})}=P_{r_0}$, we also have $x_0 \in \mathcal{R}(P_{r_0})$, so $x_0\in \bigcap_{j \in \Gamma_F} \mathcal{R}(P_j) \cap \mathcal{R}(P_{r_0})$. 
		Let $Q$ be the projection onto $\bigcap_{j \in \Gamma_F} \mathcal{R}(P_j) \cap \mathcal{R}(P_{r_0})$. Then
		\begin{align}\label{eq x_0}
			x_0 = Q x_0 
			&= Q \bigl( w\!-\!\lim_{n \to \infty} T_{k_{r_n}} x \bigr) \nonumber\\
			&= w\!-\!\lim_{n \to \infty} Q T_{k_{r_n}} x \nonumber\\
			&= w\!-\!\lim_{n \to \infty} Q P_{\sigma(k_{r_n})}P_{\sigma(k_{r_n}-1)}\cdots P_{\sigma(k_{s_n}+1)} T_{k_{s_n}} x \nonumber\\
			&= w\!-\!\lim_{n \to \infty} Q T_{k_{s_n}}x \qquad\quad(\mbox{since~}QP_j=Q,\mbox{~for~}j\in\Gamma_F\cup\{r_0\}).
		\end{align}
		
		If we show that the weak limit of all weakly convergent subsequences of $\{T_{k_{s_{n}}}x\}$ equals $x_0$, then the result follows from Lemma \ref{lemma weak}. Let $x'$ be a weak limit of a subsequence of $\{T_{k_{s_{n}}}x\}$ which we again denote by $\{T_{k_{s_{n}}}x\}$, that is, $x' = w\!-\!\lim_{n \to \infty} T_{k_{s_{n}}} x$. 
		From the hypothesis on $\{T_{k_{r_n}}\}$, we have 
		\begin{align*}
			T_{k_{s_n}}&=P_{\sigma(k_{s_n})}T_{k_{s_n}}\\
			&=P_{\sigma(k_{r_n})}T_{k_{s_n}}\qquad\qquad(\mbox{by~}\eqref{eqleftshift})\\
			&=P_{r_0}T_{k_{s_n}},
		\end{align*}
		whence we obtain $x' \in \mathcal{R}(P_{r_0})$. 
		Lemma \ref{lemma p} also gives $x' \in \bigcap_{j \in \Gamma_F} \mathcal{R}(P_j)$. 
		Hence $x' \in \bigcap_{j \in \Gamma_F} \mathcal{R}(P_j) \cap \mathcal{R}(P_{r_0})=\mathcal{R}(Q)$, and therefore, by \eqref{eq x_0} we have
		\[
		x' = Q x' = w\!-\!\lim_{n \to \infty} Q T_{k_{s_n}} x = x_0.
		\]
		Thus $\displaystyle w\!-\!\lim_{n \to \infty} T_{k_{s_n}} x = x_0$.

	\end{proof}
	We can generalize the concept of the left-shift subsequence to two-fold and 
	$\ell$-fold projections, for example in the case $\ell=2$, let $r\geq 3$ and let $P_s,P_t\in \{P_1,\ldots,P_r\}$. Let us define $s_n$ and $t_n$ with $s_n\leq r_n\leq t_n$ as follows: 
	\begin{enumerate}
		\item if $P_{\sigma(k_{r_n})}\not\in \{P_s,P_t\}$, then
		\begin{align*}
			s_n:=t_n:=r_n;
		\end{align*}
		\item if $P_{\sigma(k_{r_n})}\in \{P_s,P_t\}$, then
		\begin{align*}
			s_n:=\min\left\{p:\mbox{\text{for~all~}} m\in [p,r_n], P_{\sigma(k_m)}\in \{P_s,P_t\}\right\}
		\end{align*}
		and
		\begin{align*}
			t_n:=\max\left\{q:\mbox{\text{for~all~}} m\in [r_n,q], P_{\sigma(k_m)}\in \{P_s,P_t\}\right\}.
		\end{align*}
	\end{enumerate}
	We now show that Lemma \ref{lemma shift} holds for any $\ell$ projections with $\ell < r$.
	\begin{theorem}\label{remark1}
		
		Let $P_j \in \mathbb{B}(\mathscr{H})$ be projections onto closed subspaces $\mathscr{M}_j$ of $\mathscr{H}$ for all $j \in \mathbb{N}$. 
		Let $\sigma : \mathbb{N} \to \mathbb{N}$ be an infinite-periodic function with $\Gamma_{\infty} = \{1, \ldots, r\}$. 
		Let $\{T_n\}$ and $\{k_n\}$ be defined by \eqref{eq define Tn}. Let $ \{k_n\} $ be the sequence as in Definition \ref{pse}.	Suppose that $\{T_{k_{r_n}} x\}$ converges weakly to $x_0$ for some subsequence $\{{r_n}\}\subseteq\mathbb{N}$. Let $\ell<r$ be arbitrary. Let us define $s_n$ and $t_n$ with $s_n\leq r_n\leq t_n$ as follows: 
		\begin{enumerate}
			\item if $P_{\sigma(k_{r_n})}\not\in \{P_1,\ldots,P_\ell\}$, then
			\begin{equation}\label{eq i}
				s_n:=t_n:=r_n;
			\end{equation}
			\item if $P_{\sigma(k_{r_n})}\in \{P_1,\ldots,P_\ell\}$, then
			\begin{align}\label{eq two}
				s_n:=\min\left\{p:\mbox{\text{for~all~}} m\in [p,r_n], P_{\sigma(k_m)}\in \{P_1,\ldots,P_\ell\}\right\}
			\end{align}
			and
			\begin{align}\label{eq two1}
				t_n:=\max\left\{q:\mbox{\text{for~all~}} m\in [r_n,q], P_{\sigma(k_m)}\in \{P_1,\ldots,P_\ell\}\right\}.
			\end{align}
		\end{enumerate}
		Then $\{T_{k_{s_n}} x\}$ and $\{T_{k_{t_n}}x\}$ weakly converge to $x_0$. 
	\end{theorem}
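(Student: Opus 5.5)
We would follow the scheme of the proof of Lemma \ref{lemma shift}, replacing the single projection $P_{r_0}$ by the block $\{P_1,\ldots,P_\ell\}$. First, the indices $n$ with $P_{\sigma(k_{r_n})}\notin\{P_1,\ldots,P_\ell\}$ need no work, since there $s_n=t_n=r_n$ by \eqref{eq i}. Splitting $\{r_n\}$ into finitely many subsequences according to the value of $P_{\sigma(k_{r_n})}$, we may therefore assume $P_{\sigma(k_{r_n})}=P_{r_0}$ for all $n$, with a fixed $1\le r_0\le\ell$. As in Lemma \ref{lemma shift}, it suffices to treat $\{s_n\}$. The reason is that the block of $t_n$ has the same left end $s_n$, so the left-shift statement applied to $\{t_n\}$ together with Lemma \ref{lemma weak} gives the right-shift one, provided we first know some weak limit of $\{T_{k_{t_n}}x\}$.

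The main tool will be Lemma \ref{lemma p}: every weak cluster point of $\{T_nx\}$ lies in $\bigcap_{j\in\Gamma_F}\mathcal R(P_j)$. We will also use the fact that projections $P_j$ with $j\in\Gamma_F$ are invisible after composing with the projection onto that intersection. Inside the block $[k_{s_n},k_{r_n}]$ the factors of $T_{k_{r_n}}T_{k_{s_n}}^{-1}$ (formally, the product $P_{\sigma(k_{r_n})}\cdots P_{\sigma(k_{s_n}+1)}$) come only from $\Gamma_F\cup\{1,\ldots,\ell\}$. The plan is to decompose the block into maximal runs on which $P_{\sigma(k_m)}$ is a single constant projection $P_{j}$, $j\le\ell$. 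On each run we apply Lemma \ref{lemma shift}, which transports the weak limit to the start of the run. To cross from the start of one run to the last index of the preceding run, we use a backward analogue of Lemma \ref{lemma k_n}. We would prove it exactly as Lemma \ref{lemma k_n}, using Lemma \ref{sakai} to pass from $T_{k_m}$ to $T_{k_m-1}$ and the projection $Q$ onto $\bigcap_{\Gamma_F}\mathcal R(P_j)$ to absorb the $\Gamma_F$-factors between $k_{m-1}$ and $k_m-1$. Iterating run by run should show that $x_0$ is also the weak limit at each run boundary, and hence at $k_{s_n}$. It also gives, as a by-product, that any cluster point of $\{T_{k_{s_n}}x\}$ lies in $\mathcal R(P_j)$ for the $j$ of the first run.

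The main obstacle is that the number of runs inside $[s_n,r_n]$ need not be bounded in $n$, so a finite iteration of Lemma \ref{lemma shift} is not enough. Our first attempt would be to pass to subsequences on which the number $N_n$ of runs is either bounded, where finite iteration works, or tends to infinity. In the latter case we would argue by strong induction on $\ell$, which replaces an unbounded number of runs by the $\ell=1$ case already available, Lemma \ref{lemma shift}. The key claim would be that the weak limit $x_0$ must lie in $\bigcap_{j\in\Gamma_F}\mathcal R(P_j)\cap\bigcap_{j\le\ell,\,j\text{ occurring}}\mathcal R(P_j)$. Once this holds, we can take $Q$ to be the projection onto that intersection. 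Then $QP_j=Q$ for every factor in the block, and the computation \eqref{eq x_0} gives $x_0=w\!-\!\lim QT_{k_{s_n}}x$. As in Lemma \ref{lemma shift}, any cluster point $x'$ of $\{T_{k_{s_n}}x\}$ then satisfies $x'=Qx'=x_0$, and Lemma \ref{lemma weak} finishes the proof. We expect establishing that key membership claim to be the genuinely delicate point. If it fails in general, it is where the argument must be reorganised into a diagonal induction over runs.
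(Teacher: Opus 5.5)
There is a genuine gap, and it is the point you flag yourself. Your run-by-run transport (Lemma \ref{lemma shift} on each run, then a backward form of Lemma \ref{lemma k_n} to step into the preceding run) is fine along subsequences on which the number of runs in $[s_n,r_n]$ is bounded. In the unbounded case, however, everything rests on the membership $x_0\in\mathcal R(Q)$, which you leave unproved and even allow might fail.

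The missing idea is how to use the induction. It should not reduce to $\ell=1$; it should apply the full statement to proper subsets $A\subsetneq\{1,\ldots,\ell\}$, because one application then crosses arbitrarily many runs of $A$-indices at once. This is what the paper does.
\begin{enumerate}
\item Take indices $n$ for which $S_n=\{P_{\sigma(k_m)}: s_n\le m\le r_n\}$ is a proper subset $B$ of $\{P_1,\ldots,P_\ell\}$. There $s_n$ is also the left end of the $B$-block, so the induction hypothesis applies directly. Hence you may assume $S_n=\{P_1,\ldots,P_\ell\}$.
\item Let $A$ be the set of $j$ with $\sigma(k_{r_n})=j$ for infinitely many $n$. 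Then $\sigma(k_{r_n})\in A$ for large $n$, and $x_0\in\mathcal R(P_j)$ for every $j\in A$.
\item If $A\ne\{1,\ldots,\ell\}$, let $s^A_n$ be the left end of the $A$-block containing $r_n$. The induction hypothesis gives $w\!-\!\lim T_{k_{s^A_n}}x=x_0$, and the backward form of Lemma \ref{lemma k_n} gives $w\!-\!\lim T_{k_{s^A_n-1}}x=x_0$.
\item The $\{1,\ldots,\ell\}$-block contains an index outside $A$, so $\sigma(k_{s^A_n-1})\in\{1,\ldots,\ell\}\setminus A$. Pigeonholing yields a new $j$ with $x_0\in\mathcal R(P_j)$.
\item Enlarging $A$ and repeating gives $x_0\in\mathcal R(Q)$.
\end{enumerate}

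Your final step has a second hole. The claim ``as in Lemma \ref{lemma shift}, $x'=Qx'$'' is not automatic. In that lemma it came for free, because $T_{k_{s_n}}x\in\mathcal R(P_{r_0})$ and $Q$ involved only $P_{r_0}$. Here $T_{k_{s_n}}x$ lies only in $\mathcal R(P_{\sigma(k_{s_n})})$, and your by-product covers only the index of the first run. So you only get $Qx'=x_0$.

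The paper supplies the missing memberships with the same device run in the other direction. Pass to a subsequence with $w\!-\!\lim T_{k_{s_n}}x=x'$, and fix $j\le\ell$. Apply the induction hypothesis to the base sequence $\{s_n\}$ and the set $\{1,\ldots,\ell\}\setminus\{j\}$, take the right end $t^{(j)}_n$ of that block, and step one index right by Lemma \ref{lemma k_n}. Since $P_j$ occurs in $[s_n,r_n]$, $\sigma(k_{t^{(j)}_n+1})=j$, hence $x'\in\mathcal R(P_j)$. If $\sigma(k_{s_n})=j$, this is immediate. Only with both memberships does $x_0=w\!-\!\lim QT_{k_{r_n}}x=w\!-\!\lim QT_{k_{s_n}}x=Qx'=x'$ close the argument.
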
	 
	\begin{proof}
		For $\ell = 1$, $\{T_{k_{s_n}}x\}$ and $\{T_{k_{r_n}}x\}$ are left-shift and right-shift sequences of $\{T_{k_{r_n}}x\}$ respectively. Hence,  Lemma \ref{lemma shift} gives that $w\!-\!\lim_{n \to \infty} T_{k_{s_n}} x = x_0=w\!-\!\lim_{n \to \infty} T_{k_{t_n}} x$.
		
		Assume now the statement holds for collection of at most $\ell-1$ projections. For each $\ell_0\leq \ell-1$, let us define indices $s'_n$ and $t'_n$ with $s'_n\leq r_n\leq t'_n$ as follows: 
		\begin{enumerate}
			\item if $P_{\sigma(k_{r_n})}\not\in \{P_1,\ldots,P_{\ell_0}\}$, then
			\begin{align*}
				s'_n:=t'_n:=r_n;
			\end{align*}
			\item if $P_{\sigma(k_{r_n})}\in \{P_1,\ldots,P_{\ell_0}\}$, then
			\begin{align*}
				s'_n:=\min\left\{p:\mbox{\text{for~all~}} m\in [p,r_n], P_{\sigma(k_m)}\in \{P_1,\ldots,P_{\ell_0}\}\right\}
			\end{align*}
			and
			\begin{align*}
				t'_n:=\max\left\{q:\mbox{\text{for~all~}} m\in [r_n,q], P_{\sigma(k_m)}\in \{P_1,\ldots,P_{\ell_0}\}\right\}.
			\end{align*}
		\end{enumerate}
		Then 
		\begin{equation}\label{eqinduction}
			w\!-\!\lim_{n \to \infty} T_{k_{s'_n}} x=w\!-\!\lim_{n \to \infty} T_{k_{t'_n}} x=x_0.
		\end{equation}
		Since $\sigma^{-1}(j)$ is an infinite set for each $j$, and since $\ell < r$, the sequence $\{s_n\}$ is increasing. 
		For each $n\geq 1$, set
		\[
		S_n=\{P_{\sigma(k_m)}:s_n\leq m\leq r_n\}.
		\]
		If there exists $m\geq 1$ such that $S_n\subsetneq\{P_1,\ldots,P_\ell\}$ for each $n\geq m$, then, the sequence 
		$\{T_{k_{s_n}}x\}$
		coincides with the sequence 
		$\{T_{k_{s'_n}}x\}$
		that appears in the inductive hypothesis for some $\ell_0<\ell$. Therefore, by \eqref{eqinduction} we obtain $w\!-\!\lim_{n \to \infty} T_{k_{s_n}} x=x_0$. Thus, without loss of generality, we may assume that $S_n=\{P_1,\ldots,P_\ell\}$ for all $n\geq 1$. Let $Q$ be the projection onto 
		\[
		\bigcap_{j \in \Gamma_F} \mathcal{R}(P_j)\cap \mathcal{R}(P_1)\cap\cdots \cap \mathcal{R}(P_\ell).
		\] We first show that $x_0\in\mathcal{R}(Q)$. Indeed, if for each $1\leq \ell_0\leq \ell$, the set $\{n:P_{\ell_0}=P_{\sigma(k_{r_n})}\}$ is infinite, then $x_0\in \mathcal{R}(Q)$. Otherwise, after re-indexing, let $\ell'$ ($\ell'<\ell$)
		be the largest index such that the sets $\{n:P_{\ell_0}=P_{\sigma(k_{r_n})}\}$ are infinite for all $1\leq \ell_0\leq \ell'$. Then $x_0\in \bigcap_{j \in \Gamma_F} \mathcal{R}(P_j)\cap \mathcal{R}(P_1)\cap\cdots \cap \mathcal{R}(P_{\ell'})$.
		Choose $\{T_{k_{s'_n}}x\}$ such that $s'_n$ satisfies condition (2) of the induction hypothesis. Then, $w\!-\!\lim_{n \to \infty} T_{k_{s'_n}} x=x_0$. Since, $S_n=\{P_1,\ldots,P_\ell\}$ for all $n\geq 1$, we have $P_{\sigma(k_{s'_n-1})}\not\in \{P_1,\ldots,P_{\ell'}\}$ for infinitely many $n\geq 1$. Hence there exists $\ell'<\ell''\leq \ell$ such that $P_{\sigma(k_{s'_n-1})=}P_{\ell''}$ for infinitely many $n$. Without loss of generality, take $\ell''=\ell'+1$. Moreover, in light of Lemma \ref{lemma k_n}, we have $w\!-\!\lim_{n \to \infty} T_{k_{s'_n-1}} x=x_0$. Therefore, $x_0\in \mathcal{R}(P_{\ell'+1})$. 
		
		Replacing $\{P_1,\ldots,P_{\ell'}\}$ by $\{P_1,\ldots,P_{\ell'+1}\}$ and applying the induction hypothesis again yields $x_0\in \mathcal{R}(P_{\ell'+2})$. Repeating this argument gives $x_0\in \bigcap_{j \in \Gamma_F} \mathcal{R}(P_j)\cap \mathcal{R}(P_1)\cap\cdots \cap \mathcal{R}(P_r)=\mathcal{R}(Q)$

		Now, let $x'$ be a weak limit of any subsequence of $\{T_{k_{s_n}}x\}$. Without loss of generality, assume $x' = w\!-\!\lim_{n \to \infty} T_{k_{s_n}} x$.
		
		We claim that $x'\in \mathcal{R}(Q)$. According to Lemma \ref{lemma p}, we have $x' \in \bigcap_{j \in \Gamma_F} \mathcal{R}(P_j)$.
		Let $\ell_0\leq \ell$ be arbitrary. Let $\{T_{k_{t'_n}}x\}$ be the sequence from the induction hypothesis corresponding to the projections $\{P_1,\ldots,P_{\ell}\}\setminus\{\ell_0\}$. After a suitable relabeling, we may assume $\ell_0=\ell$, so that the sequence is associated with $\{P_1,\ldots,P_{\ell-1}\}$. By the induction hypothesis, $ w\!-\!\lim_{n \to \infty} T_{k_{t'_n}} x=x'$. Since $S_n=\{P_1,\ldots,P_\ell\}$ for all $n$, we have $P_{\sigma(k_{{t'_n}+1})}=P_{\ell}$ for infinitely many $n$. Lemma \ref{lemma k_n} then gives that $ w\!-\!\lim_{n \to \infty} T_{k_{t'_n+1}} x=x'$, so $x'\in \mathcal{R}(P_\ell)=\mathcal{R}(P_{\ell_0})$. As $\ell_0\leq \ell$ was arbitrary, it follows that $x'\in \mathcal{R}(Q)$.
		From \eqref{eq two}, for each $j\in [s_n,r_n]$ we have $QP_{\sigma(j)}=Q$. Therefore, 
		\[
		QT_{k_{r_n}}=QP_{\sigma(k_{r_n})}P_{\sigma(k_{r_n}-1)}\cdots P_{\sigma(k_{s_n}+1)}T_{k_{s_n}}=QT_{k_{s_n}}.
		\] Consequently,
		\begin{align*}\label{eq x_0 1}
			x_0 = Q x_0 
			= Q \bigl( w\!-\!\lim_{n \to \infty} T_{k_{r_n}} x \bigr)
			= w\!-\!\lim_{n \to \infty} Q T_{k_{r_n}} x 
			=w\!-\!\lim_{n \to \infty} Q T_{k_{s_n}} x
			= Qx' 
			= x'.
		\end{align*}
	\end{proof}

	Now we are ready to present the main result.
	
	\begin{theorem}\label{main th}
		Let $P_n \in \mathbb{B}(\mathscr{H})$ be projections onto closed subspaces $\mathscr{M}_n$ of $\mathscr{H}$ for all $n \in \mathbb{N}$. 
		Let $\sigma : \mathbb{N} \to \mathbb{N}$ be an infinite-periodic function  such that $\Gamma_\infty=\{1,\cdots, r\}$  for some $r\in \mathbb{N}$.
		Set $T_1 := P_{\sigma(1)}$, $T_n := P_{\sigma(n)} T_{n-1}$ for all $n \ge 2$. 
		Then, $\{T_n x\}$ converges weakly to $Px$, where $P$ is the projection onto $\bigcap_{i \in \mathbb{N}} \mathcal{R}(P_i)$.
	\end{theorem}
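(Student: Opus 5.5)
Following the proof of Theorem \ref{thempty}, by Lemma \ref{lemma weak} it suffices to show that every weakly convergent subsequence $\{T_{n_i}x\}$ of the bounded sequence $\{T_nx\}$ has weak limit $Px$. Let $x_0:=w\!-\!\lim_i T_{n_i}x$. Since $PP_j=P$ for every $j$, we have $PT_n=P$ for all $n$, so
\[
Px=w\!-\!\lim_{i\to\infty} PT_{n_i}x=Px_0 .
\]
It therefore suffices to prove $x_0\in\bigcap_{i\in\mathbb{N}}\mathcal{R}(P_i)$, since then $Px_0=x_0$. Lemma \ref{lemma p} already gives $x_0\in\bigcap_{j\in\Gamma_F}\mathcal{R}(P_j)$. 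The remaining task is to show $x_0\in\mathcal{R}(P_1)\cap\cdots\cap\mathcal{R}(P_r)$. If $r=1$, I expect this to follow from the argument below using only Lemmas \ref{lemma k_n} and \ref{lemma p}.

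The first step is to transfer the subsequence to the times $k_m$. For each $i$, let $k_{r_i}$ be the largest element of $\{k_m\}$ with $k_{r_i}\le n_i$. Every index strictly between $k_{r_i}$ and $n_i$ lies in $\bigcup_{j\in\Gamma_F}\sigma^{-1}(j)$. Let $Q_F$ be the projection onto $\bigcap_{j\in\Gamma_F}\mathcal{R}(P_j)$. Then $Q_FT_{n_i}=Q_FT_{k_{r_i}}$. Passing to a further subsequence, we may assume $T_{k_{r_i}}x\rightharpoonup y$. By Lemma \ref{lemma p}, $y\in\mathcal{R}(Q_F)$, and hence
\[
y=Q_Fy=w\!-\!\lim Q_FT_{k_{r_i}}x=w\!-\!\lim Q_FT_{n_i}x=Q_Fx_0=x_0 .
\]
Thus we may assume the subsequence is indexed by $k_{r_n}$, with $T_{k_{r_n}}x\rightharpoonup x_0$. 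By Lemma \ref{lemma k_n}, $T_{k_{r_n+\ell}}x\rightharpoonup x_0$ for every $\ell\ge1$.

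The second step shows that $x_0$ lies in each $\mathcal{R}(P_j)$ with $j\le r$. Take $\ell=r-1$ in Theorem \ref{remark1}, applied to a relabelling that omits some index $j_0\in\{1,\dots,r\}$. The resulting $t_n$ marks the end of the maximal block of $\{k_m\}$ whose labels avoid $j_0$. Theorem \ref{remark1} gives $T_{k_{t_n}}x\rightharpoonup x_0$, and Lemma \ref{lemma k_n} gives $T_{k_{t_n+1}}x\rightharpoonup x_0$. The next time $k_{t_n+1}$ must carry the label $j_0$, because $\sigma^{-1}(j_0)$ is infinite. Therefore $T_{k_{t_n+1}}x=P_{j_0}T_{k_{t_n+1}}x\in\mathcal{R}(P_{j_0})$, so $x_0\in\mathcal{R}(P_{j_0})$. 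This covers the case $P_{\sigma(k_{r_n})}\neq P_{j_0}$, where the block is nontrivial. In the case $P_{\sigma(k_{r_n})}=P_{j_0}$ for infinitely many $n$, we get $x_0\in\mathcal{R}(P_{j_0})$ directly. Since $j_0$ was arbitrary, $x_0\in\mathcal{R}(P)$, which finishes the proof.

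The main obstacle is the bookkeeping in the second step. Theorem \ref{remark1} is stated for the fixed set $\{P_1,\dots,P_\ell\}$, so one must check that relabelling the indices in $\Gamma_\infty$ is legitimate. One must also handle the subsequence split according to whether $\sigma(k_{r_n})=j_0$, which is done by passing to subsequences. In addition, the case $r=1$ should be handled directly: $\{k_m\}$ is then infinite, and $T_{k_{r_n}}x=P_1T_{k_{r_n}}x$ gives $x_0\in\mathcal{R}(P_1)$ at once. Finally, in the first step one needs $k_{r_i}\to\infty$, which holds because $\sigma^{-1}(j)$ is infinite for every $j\in\Gamma_\infty$.
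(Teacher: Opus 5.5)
Your proof is correct. Its first step matches the paper's exactly: you replace $n_i$ by the last $k$-time $k_{r_i}\le n_i$, use $Q_FT_{n_i}=Q_FT_{k_{r_i}}$, and invoke Lemma \ref{lemma p} to identify the two weak limits. The second step is organized differently. The paper considers the set $R$ of labels that $\sigma(k_{r_n})$ takes infinitely often. In Case B it applies Theorem \ref{remark1} with $\ell=|R|$, steps one index left of the block start $s_n$ to pick up a new label $r'$, and then ``repeats'', enlarging the known label set until it exhausts $\Gamma_\infty$. You instead fix each $j_0\in\Gamma_\infty$ and apply Theorem \ref{remark1} once, to the $r-1$ labels $\Gamma_\infty\setminus\{j_0\}$. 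You then step one index right of the block end $t_n$, and that index must carry the label $j_0$.

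This gains two things. First, the main proof needs no iteration. Read literally, the paper's instruction to replace $\{T_{k_{r_n}}x\}$ by $\{T_{k_{s_n-1}}x\}$ resets the label set to $\{r'\}$ and could cycle; one really has to keep accumulating $R\cup\{r'\}\cup\cdots$. Second, you use Lemma \ref{lemma k_n} exactly as stated, as a forward shift. The paper's passage to $s_n-1$ needs a backward version, which is true by the same argument but is not what the lemma says.

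The price is that you need Theorem \ref{remark1} at its top level $\ell=r-1$, and you need its right-end conclusion. The paper writes that proof out only for $s_n$. The $t_n$ case follows as in Lemma \ref{lemma shift}, because the block through $t_n$ has the same left end $s_n$. Also, the label-accumulation argument you avoid is carried out inside the proof of that theorem, so the gain is in transparency rather than substance. The relabelling of $\Gamma_\infty$ you flag is harmless, since every hypothesis is invariant under permutations of the labels $1,\ldots,r$.
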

	
	\begin{proof} 
		By the definition of an infinite-periodic function let $I(\sigma,j) = m_j < \infty$ for all $j \ge r+1$.

		Let $\{T_{n_i} x\}$ be an arbitrary weakly convergent subsequence of $\{T_n x\}$ with weak limit $x_0$. 
		It follows from Lemma \ref{lemma p} that $x_0 \in \bigcap_{j \in \Gamma_F} \mathcal{R}(P_j)$. 
		Let $Q$ be the projection onto $\bigcap_{j \in \Gamma_F} \mathcal{R}(P_j)$. Then $Q x_0 = x_0$ and $\{Q T_{n_i} x\}$ converges weakly to $x_0$. 
		
		Note that
		\[
		Q T_{n_i} x = Q T_{k_{r_i}} x,
		\]
		where $k_{r_i} \le n_i < k_{r_i+1}$ for each $i\geq 1$. We represent sequence $\{r_i\}$ with $\{r_n\}$.
		Since $\{T_{k_{r_n}} x\}$ is bounded, it has a weakly convergent subsequence; without loss of generality we may assume $\{T_{k_{r_n}} x\}$ itself converges weakly to some $x_1$. 
		Then $Q x_1 = x_0$. Again Lemma \ref{sakai} yields that $x_1 \in \bigcap_{j \in \Gamma_F} \mathcal{R}(P_j)$, so $x_1 = Q x_1 = x_0$.
		
		Let
		\[
		R = \{ P_s : P_s = P_{\sigma(k_{r_n})} \text{ for infinitely many } n \} \subseteq \{P_1, \ldots, P_r\}.
		\]
		We consider two cases:

		\textbf{Case A.} If $R=\{P_1,P_2,\ldots,P_r\}$, then $x_0\in \bigl( \bigcap_{j \in \Gamma_F} \mathcal{R}(P_j) \bigr) \cap \bigl( \bigcap_{j=1}^r \mathcal{R}(P_j) \bigr)$. So, $Px_0=x_0$ and hence, 
		\begin{align}\label{eq r1}
			w\!-\!\lim_{i \to \infty} T_{{n_i}} x &=w\!-\!\lim_{n \to \infty} T_{k_{r_n}} x =x_0=Px_0\nonumber\\
			&=Pw\!-\!\lim_{i \to \infty} T_{n_i} x =w\!-\!\lim_{i \to \infty} PT_{n_i} x\nonumber\\
			&=Px.	
		\end{align}
		
		\textbf{Case B.} If $R=\{P_1,P_2,\ldots, P_{r_0}\}$, where $r_0<r$, then $x_0 \in \bigl( \bigcap_{j \in \Gamma_F} \mathcal{R}(P_j) \bigr) \cap \bigl( \bigcap_{j \in R} \mathcal{R}(P_j) \bigr)$. 
		Let $Q$ be the projection onto $\bigl( \bigcap_{j \in \Gamma_F} \mathcal{R}(P_j) \bigr) \cap \bigl( \bigcap_{j \in R} \mathcal{R}(P_j) \bigr)$. Suppose that $T_{k_{s_n}}$ is as in Theorem \ref{remark1} with respect to $\ell=r_0$, that is, $QP_{\sigma(j)}=Q$ for all $s_n\leq j\leq r_n$, and we have $P_{\sigma(k_{s_n-1})}\not\in \{P_1,P_2,\cdots, P_{r_0}\}$ for infinitely many $n$. We therefore have 
		\[
		Q T_{k_{r_n}} = Q T_{k_{s_n}}.
		\]

		By Theorem \ref{remark1}, we have that $w\!-\!\lim_{n \to \infty} T_{k_{s_n}} x=x_0$. In addition, by Lemma \ref{lemma k_n}, we have $w\!-\!\lim_{n \to \infty} T_{k_{s_n-1}} x=x_0$. Since $P_{\sigma(k_{s_n-1})}\not\in \{P_1,\cdots, P_{r_0}\}$, thus there exists $r_0<r'\leq r$ such that $P_{\sigma(k_{s_n-1})}=P_{r'}$ for infinitely many $n$, and hence, $x_0\in \mathcal{R}(P_{r'})$. By repeating this process and  replacing $\{T_{k_{r_n}}x\}$ with $\{T_{k_{s_n-1}}x\}$, we obtain $x_0\in \bigcap_{j\in \Gamma_{\infty}}\mathcal{R}(P_j)$. This ensures, in the same way as in \eqref{eq r1}, that $w\!-\!\lim_{n \to \infty} T_{k_{r_n}} x=Px$. 
		Therefore the weak limit of any weakly convergent subsequence of $\{T_n x\}$ lies in $\bigcap_j \mathcal{R}(P_j)$, and in view of Lemma \ref{lemma weak}, the whole sequence converges weakly to $Px$.
	\end{proof}
	
	\section{Concluding remarks}
	
	We start this section with an example of an infinite-periodic function.
	
	\begin{example}\label{exam1}
		Let $\{P_i : i \geq 1\}$ be a sequence of projections on a Hilbert space $\mathscr{H}$. Define $\sigma : \mathbb{N} \to \mathbb{N}$ by
		\begin{equation}\label{eqexampleg11}
			\sigma(n):= i, \quad \bigl( n = 2^{i-1}(2k-1), \mbox{where~} i,k \in \mathbb{N} \bigr).
		\end{equation}
		For instance, for $i = 1$, using \eqref{eqexampleg11}, the set of all $l_n$ satisfying $\sigma(l_n) = 1$ is
		$\{1, 3, 5, 7, \ldots\}$.
		For $i = 2$, the set of all $l_n$ satisfying $\sigma(l_n) = 2$ is $\{2, 6, 10, 14, \ldots\}$.
		From \eqref{eqexampleg11} we obtain, for each $i \geq 1$,
		\[
		I(\sigma,1) = 2,\; I(\sigma,2) = 4,\; I(\sigma,3) = 8,\; \ldots,\; I(\sigma,i) = 2^i .
		\]
		Hence $\sigma$ is an infinite-periodic function. In this case,
		\[
		\Gamma_F = \{ i : i \geq 1 \}\quad \mbox{and} \quad \Gamma_\infty = \emptyset .
		\]

	\end{example}
	\begin{remark}\label{lemma strong}
		Now let $\sigma$ be an infinite-periodic function defined in Example \ref{exam1}. Fix $x \in \mathscr H$ and define
		\[
		T_1 x = P_{\sigma(1)} x\quad \mbox{and} \quad T_n x = P_{\sigma(n)} T_{n-1} x \quad (n \geq 2).
		\]
		
		Employing Theorem \ref{main th} gives $w\!-\!\lim_{n\to \infty} T_n x = Px$. 
		
		Let $n \ge 1$, $1 \le k < 2^n$, and write $k = 2^{i-1}(2r-1)$. Then
		\[
		2^n - k = 2^n - 2^{i-1}(2r-1) = 2^{i-1}(2^{n-i+1} - 2r + 1) = 2^{i-1}(2s-1),
		\]
		where $s = 2^{n-i} - r + 1$. Hence $\sigma(2^n - k) = \sigma(k)$ for all $n \ge 1$ and for all $1 \le k < 2^n$. 
		Similarly $\sigma(2^n + k) = \sigma(k)$. Consequently for all $n\geq 1$
		\begin{align}\label{eq T_n}
			T_{2^{n+1}-1} &=P_{\sigma(2^{n}+2^n-1)}\ldots P_{\sigma(2^{n}+2)} P_{\sigma(2^{n}+1)} T_{2^n}\nonumber\\
			&=P_{\sigma(2^n-1)}\cdots P_{\sigma(2)}P_{\sigma(1)}T_{2^n}\nonumber\\
			& =T_{2^{n}-1}T_{2^n}, 
		\end{align}
		and
		\begin{align}\label{eq T^*_n}
			T_{2^n-1}^* &= \left(P_{\sigma(2^n-1)}P_{\sigma(2^n-2)}\cdots P_{\sigma(2)}P_{\sigma(1)}\right)^*\nonumber\\
			&=P_{\sigma(2^n-(2^n-1))}P_{\sigma(2^n-(2^n-2))}\cdots P_{\sigma(2^n-2)}P_{\sigma(2^n-1)}\nonumber\\
			&=P_{\sigma(2^n-1)}P_{\sigma(2^n-2)}\cdots P_{\sigma(2)} P_{\sigma(1)}= T_{2^n-1},
		\end{align}
		where $^*$ denotes that the adjoint operation.	Therefore
		\begin{align*}
			\|Px\|^2 &= \lim_{n\to\infty} \langle T_{2^{n+1}-1} x, x \rangle \\
			&= \lim_{n\to\infty} \langle T_{2^n-1} T_{2^n} x, x \rangle\qquad\qquad(\text{by~}\eqref{eq T_n}) \\
			&= \lim_{n\to\infty} \langle T_{2^n} x, T^*_{2^n-1} x \rangle \\
			&= \lim_{n\to\infty} \langle T_{2^n} x, T_{2^n-1} x \rangle\qquad\qquad(\text{by~}\eqref{eq T^*_n}) \\
			&= \lim_{n\to\infty} \langle P_{\sigma(2^n)}T_{2^n} x, T_{2^n-1} x \rangle \qquad(\text{since }P_{\sigma(2^n)}\text{ is a projection})\\
			&= \lim_{n\to\infty} \langle T_{2^n} x, P_{\sigma(2^n)}T_{2^n-1} x \rangle\\
			&= \lim_{n\to\infty} \langle T_{2^n} x, T_{2^n} x \rangle \\
			&= \lim_{n\to\infty} \|T_{2^n} x\|^2.
		\end{align*}
		This shows that $\{T_n x\}$ converges strongly to $Px$.
	\end{remark}
	
	\begin{corollary}
		Let $\sigma:\mathbb{N}\to \mathbb{N}$ be an infinite-periodic function.
		Suppose there exists a subsequence $\{r_n\}$ of $\mathbb{N}$ such that each $T_{r_n}$ is positive, where $\{T_n\}$ is defined by \eqref{eq define Tn}. 
		Then $\{T_n x\}$ converges strongly to $Px$, where $P$ is the projection onto $\bigcap_{i \in J} \mathcal{R}(P_i)$.
	\end{corollary}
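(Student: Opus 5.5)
The plan is to combine the weak convergence from Theorem \ref{main th} with the positivity hypothesis, in the spirit of Remark \ref{lemma strong}. Weak convergence $T_nx\rightharpoonup Px$ together with $\|T_nx\|\to\|Px\|$ gives strong convergence. The norms $\|T_nx\|$ are nonincreasing, since each $T_n=P_{\sigma(n)}T_{n-1}$ with $\|P_{\sigma(n)}\|\le 1$. So $\lim_n\|T_nx\|$ exists, and it suffices to identify it along the subsequence $\{r_n\}$ (or along a closely related sequence). If $\Gamma_\infty=\emptyset$ we use Theorem \ref{thempty} in place of Theorem \ref{main th}.

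The key computation runs as follows. Since $T_{r_n}$ is positive, $T_{r_n}=T_{r_n}^*$. We also have $T_{r_n}=P_{\sigma(r_n)}T_{r_n-1}$, so $T_{r_n}^*T_{r_n}=T_{r_n}T_{r_n}$, and we want to relate this to $T_{r_n}$ itself. I would use the observation that $\langle T_{r_n}x,x\rangle\to\langle Px,x\rangle=\|Px\|^2$ by weak convergence. I would then compare $\langle T_{r_n}x,x\rangle$ with $\|T_{m}x\|^2$ for a suitable index $m$.

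A clean route: with $A=T_{r_n}\ge 0$ and $\|A\|\le1$, we have $\|Ax\|^2=\langle A^2x,x\rangle\le\langle Ax,x\rangle$. This gives
\[
\lim_n\|T_nx\|^2=\lim_n\|T_{r_n}x\|^2\le\lim_n\langle T_{r_n}x,x\rangle=\|Px\|^2.
\]
Conversely, $\|Px\|\le\liminf\|T_nx\|$ by weak lower semicontinuity of the norm. Hence $\|T_nx\|\to\|Px\|$.

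Finally, from $T_nx\rightharpoonup Px$ we get
\[
\|T_nx-Px\|^2=\|T_nx\|^2-2\,\mathrm{Re}\langle T_nx,Px\rangle+\|Px\|^2\to\|Px\|^2-2\|Px\|^2+\|Px\|^2=0,
\]
which is strong convergence. The main point to check is the operator inequality $A^2\le A$ for a positive contraction; it follows from the functional calculus, since $t^2\le t$ on $[0,1]$. Beyond that, the argument only needs the weak limit $Px$ from Theorem \ref{main th} (or \ref{thempty}) and the monotonicity of norms.
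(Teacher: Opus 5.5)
Your argument is correct, but it takes a different route from the paper.

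\textbf{The paper's route.} It shows that the positive operators $T_{r_n}$ decrease in the operator order. From $\|T_{r_{n+1}}x\|\le\|T_{r_n}x\|$ it gets $T_{r_{n+1}}^2\le T_{r_n}^2$. It then passes, implicitly via the operator monotonicity of the square root, to $T_{r_{n+1}}\le T_{r_n}$. The paper actually writes $T_{r_n+1}$, which has to be read as $T_{r_{n+1}}$, since $T_{r_n+1}=P_{\sigma(r_n+1)}T_{r_n}$ need not be self-adjoint. Vigier's theorem then gives $T_{r_n}\to Q$ strongly, and Theorem \ref{main th} identifies $Q=P$. The passage from the subsequence $\{T_{r_n}x\}$ to the whole sequence $\{T_nx\}$ is left unstated.

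\textbf{Your route.} You use the elementary inequality $A^2\le A$ for a positive contraction to get $\|T_{r_n}x\|^2\le\langle T_{r_n}x,x\rangle\to\|Px\|^2$. This pins down the limit of the monotone sequence $\|T_nx\|$ directly from the weak limit. Weak lower semicontinuity together with weak convergence then finishes the proof.

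\textbf{Comparison.} Your approach avoids both the L\"owner--Heinz inequality and Vigier's theorem. It also makes explicit two points the paper glosses over: the extension to the full sequence via monotonicity of $\|T_nx\|$, and the case $\Gamma_\infty=\emptyset$ via Theorem \ref{thempty}. The paper's route gives the extra structural fact that $\{T_{r_n}\}$ is decreasing in the operator order, but that fact is not needed for the conclusion.
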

	
	\begin{proof}
		Since $\|T_{r_n+1} x\| \le \|T_{r_n} x\|$, we have $\|T_{r_n+1} x\|^2 \le \|T_{r_n} x\|^2$, hence $T_{r_n+1}^2 \le T_{r_n}^2$. 
		This implies $T_{r_n+1} \le T_{r_n}$. By the Vigier theorem \cite[Theorem 4.1.1]{Mur}, $\{T_{r_n}\}$ converges strongly to an operator $Q$. 
		Theorem \ref{main th} yields $Q = P$.
	\end{proof}
	
	
	\medskip
	\noindent \textit{Author Contributions Statement.} All authors wrote the main manuscript text and they edited and reviewed the manuscript.
	
	\medskip
	\noindent \textit{Conflict of Interest Statement.} On behalf of the authors, the corresponding author states that there is no conflict of interest.
	
	\medskip
	\noindent\textit{Data Availability Statement.} Data sharing is not applicable to this article as no datasets were generated or analysed during the current study.
	
	\medskip
	\noindent\textit{Acknowledgments.} The authors would like to sincerely thank the referee for several useful comments improving the paper.
	
	\medskip


\begin{thebibliography}{99}
		
		\bibitem{AA} 
		I. Amemiya and T. Ando, \textit{Convergence of random products of contractions in Hilbert space}, 
		Acta Sci. Math. (Szeged) \textbf{26} (1965), 239--244.
		
		\bibitem{BS} C. Badea and D. Seifert, \textit{Ritt operators and convergence in the method of alternating projections}, J. Approx. Theory \textbf{205} (2016), 133--148.
		
		\bibitem{BOR1} P. Borodin and E. Kopeck\'{a}, \textit{Consecutive projections and greedy approximation in Hilbert space}, J. Math. Sci. (N.Y.) \textbf{299} (2026), no. 3, 314--343.
		
		\bibitem{BOR2} P. Borodin and E. Kopeck\'{a}, \textit{Convergence of remote projections onto convex sets}, Pure Appl. Funct. Anal. \textbf{8} (2023), no. 6, 1603--1620.
		
		\bibitem{BOR3} P. Borodin and E. Kopeck\'{a}, \textit{Weak limits of consecutive projections and of greedy steps}, Proc. Steklov Inst. Math. \textbf{319} (2022), no. 1, 56--63.
		
		\bibitem{DEU2} F. Deutsch, \textit{The method of alternating orthogonal projections}, 
		In: Singh, S.P. (ed.) Approximation Theory, Spline Functions and Applications, 
		NATO ASI Series, vol. 356, Springer, Dordrecht, 1992.
		
		\bibitem{DYE} J. M. Dye and S. Reich, \textit{On the unrestricted iteration of projections in Hilbert space}, J. Math. Anal. Appl. \textbf{156} (1991), no. 1, 101--119.
		
		\bibitem{DYE2} J. M. Dye and S. Reich, \textit{Unrestricted iterations of nonexpansive mappings in Hilbert space}, Nonlinear Anal. \textbf{18} (1992), no. 2, 199--207.
		
		\bibitem{EM} R. Eskandari and M. S. Moslehian, \textit{Convergence of random products of countably infinitely many projections}, J. Approx. Theory \textbf{317} (2026), 106286.
		
		\bibitem{EBM} R. Eskandari, A. M. Bikchentaev, and M. S. Moslehian, \textit{Generalizations of von Neumann's alternating projections theorem to contractions and positive operators}, 
		Linear Alg. Appl., to appear, https://doi.org/10.1016/j.laa.2026.03.022
		
		\bibitem{N} I. Halperin, \textit{The product of projection operators}, Acta Sci. Math. (Szeged) \textbf{23} (1962), no. 2, 96--99.
		
		\bibitem{KM} E. Kopeck\'{a} and V. M\"{u}ller, \textit{A product of three projections}, 
		Studia Math. \textbf{223} (2014), no. 2, 175--186.
		
		\bibitem{KP} E. Kopeck\'{a} and A. Paszkiewicz, \textit{Strange products of projections}, Israel J. Math. \textbf{219} (2017), no. 1, 271--286. 
		
		\bibitem{KR1} E. Kopeck\'{a} and S. Reich, \textit{A note on the von Neumann alternating projections algorithm}, J. Nonlinear Convex Anal. \textbf{5} (2004), no. 3, 379--386.
		
		\bibitem{KR2} E. Kopeck\'{a} and S. Reich, \textit{Another note on the von Neumann alternating projections algorithm}, J. Nonlinear Convex Anal. \textbf{11} (2010), no. 3, 455--460.
		
		\bibitem{MDD} \'{I}. D. L. Melo, J. X. da Cruz Neto, and J. M. Machado de Brito, \textit{Strong convergence of alternating projections}, J. Optim. Theory Appl. \textbf{194} (2022), no. 1, 306--324.
		
		\bibitem{Mur} G. J. Murphy, \textit{$C^*$-Algebras and Operator Theory}, Academic Press, New York, 1990.
		
		\bibitem{PAS} A. Paszkiewicz, \textit{The Amemiya--Ando conjecture falls}, arXiv:1203.3354.
		
		\bibitem{PRZ} E. Pustylnik, S. Reich, and A. J. Zaslavski, \textit{Inner inclination of subspaces and infinite products of orthogonal projections}, J. Nonlinear Convex Anal. \textbf{14} (2013), no. 3, 423--436.
		
		\bibitem{PUS1} E. Pustylnik, S. Reich, and A. J. Zaslavski, \textit{Convergence of non-periodic infinite products of orthogonal projections and nonexpansive operators in Hilbert space}, 
		J. Approx. Theory \textbf{164} (2012), no. 5, 611--624.
		
		\bibitem{PUS2} E. Pustylnik, S. Reich, and A. J. Zaslavski, \textit{Convergence of non-cyclic infinite products of operators}, J. Math. Anal. Appl. \textbf{380} (2011), no. 2, 759--767.
		
		\bibitem{REI} S. Reich, \textit{The alternating algorithm of von Neumann in the Hilbert ball}, Dynam. Systems Appl. \textbf{2} (1993), no. 1, 21--25. 
		
		\bibitem{RZ1} S. Reich and R. Zalas, \textit{Polynomial estimated for the method of cyclic projections in Hilbert spaces}, Numerical Algorithms \textbf{94} (2023), 1217--1242.
		
		\bibitem{RZ2} S. Reich and R. Zalas, \textit{Comparing the methods of alternating and simultaneous projections for two subspaces}, Linear Algebra Appl. 683 (2024), 235--263.
				
		\bibitem{SAK} M. Sakai, \textit{Strong convergence of infinite products of orthogonal projections in Hilbert space}, Appl. Anal. \textbf{59} (1995), no. 1–4, 109--120.
		
		\bibitem{THI} D. K. Thimm, \textit{Most iterations of projections converge}, J. Optim. Theory Appl. \textbf{203} (2024), no. 1, 285--304.
		
		\bibitem{THI2} D. K. Thimm, \textit{On a meager full measure subset of $N$-ary sequences}, Appl. Set-Valued Anal. Optim. \textbf{6} (2024), 81--86.
		
		\bibitem{VON} J. von Neumann, \textit{On rings of operators. Reduction theory}, Ann. of Math. \textbf{50} (1949), no. 2, 401--485.
		
	\end{thebibliography}
\end{document}